\documentclass{article}
\usepackage{amsmath,amsfonts,amssymb,amsopn,amscd,amsthm,accents}
\usepackage{bbm,wasysym,stmaryrd}
\usepackage{hyperref,color}
\usepackage[normalem]{ulem}
\usepackage{graphicx}
\usepackage[nocompress]{cite}

\newcommand{\Exp}{\mathbbm{E}}
\newcommand{\E}{\mathcal{E}}

\newcommand{\R}{\mathbbm{R}}
\newcommand{\N}{\mathbbm{N}}

\newcommand{\F}{\mathcal{F}}	

\newcommand{\f}{\varphi}

\newcommand{\der}[2]{ \frac{\text{d} #1}{\text{d} #2} }  

\newcommand{\M}{\mathcal{M}}
\newcommand{\C}{\mathcal{C}}

\newcommand{\bmax}{\lVert b \rVert_{\infty}}

\theoremstyle{plain}
\newtheorem{theorem}{Theorem}

\newtheorem{proposition}[theorem]{Proposition}
\newtheorem{lemma}[theorem]{Lemma}

\newtheorem{remark}{\it Remark\/}

\begin{document}

\title{Large deviations for randomly connected neural networks: II. State-dependent interactions}
\date{\today}
\author{Tanguy Cabana \and Jonathan Touboul}
\maketitle

\begin{abstract}
This work continues the analysis of large deviations for randomly connected neural networks models of the brain. The originality of the model relies on the fact that the directed impact of one particle onto another depends on the state of both particles, and (ii) have random Gaussian amplitude with mean and variance scaling as the inverse of the network size. Similarly to the spatially extended case, we show that under sufficient regularity assumptions, the empirical measure satisfies a large-deviation principle with good rate function achieving its minimum at a unique probability measure, implying in particular its convergence in both averaged and quenched cases, as well as a propagation of chaos property (in the averaged case only). The class of model we consider notably includes a stochastic version of Kuramoto model with random connections. 
\end{abstract}

\bigskip

\hrule

\tableofcontents

\medskip

\hrule

\section{Introduction}
We pursue our study of randomly connected neural networks inspired from neurobiology. In the companion paper~\cite{cabana-touboul:15}, we studied spatially extended neural networks with space-dependent delays and random interactions with mean and variance depending on cell locations, and scaling as the inverse of the network size. In that model, as well as in all previous works dealing with similar interaction coefficients scalings, the fact that the interaction between two particles may depend on the state of both particles was neglected. However, it is now well admitted that the interactions between two neurons depend on the state of both the pre-synaptic and post-synaptic cell~\cite{ermentrout-terman:10b,gerstner-kistler:02}. This type of state-dependent interaction is much more general and actually ubiquitous in life science including models collective animal behaviors~\cite{carrillo-fornasier:10} or a natural coupled oscillators such as those described by the canonical Kuramoto model~\cite{kuramoto:75, strogatzSync,strogatz2000kuramoto,lucconthesis}. The present manuscript addresses the dynamics of networks with state-dependent interactions and random coupling amplitudes, in a general setting. In detail, we consider the interaction of $N$ agents described by a real state variable $(X^{i,N}_t)_{i=1\cdots N}\in \R^N$ and satisfying a stochastic differential equation of the type of~\cite[equation (1)]{cabana-touboul:15}:
\begin{equation}\label{eq:Standard equation}
	dX^{i,N}_t=\left(f(r_i,t,X^{i,N}_t) + \sum_{j=1}^{N} J_{ij} b(X^{i,N}_t, X^{j,N}_t) \right)\,dt+\lambda dW^{i}_t,
\end{equation}
where $f$ describes the intrinsic dynamics of the particle, $J_{ij}$ models the random interaction amplitude, $b(x,y)$ is the typical impact of a particle with state $y$ on a particle with state $x$, and each particle is subject to independent Brownian fluctuations $(W^{i}_{t})$, see~\cite{cabana-touboul:15} for details on this equation.
 
Following the general methodology introduced in~\cite{ben-arous-guionnet:95,guionnet:97,ben-arous-dembo-guionnet:01,ben-arous-guionnet:97} also used in the companion paper~\cite{cabana-touboul:15}, we will show using large-deviations techniques that the empirical measure of system~\eqref{eq:Standard equation}, averaged over the disorder parameters, satisfies a Large Deviations Principle (LDP), with an explicit good rate function that has a unique minimum implying convergence of the network equations towards a non-Markovian complex mean-field equation. Taking into account general interactions introduces a number of specific difficulties compared to previous works. In particular, the dependence in the state of the particle induces complex interdependences between processes that prevents from isolating exponential martingales terms as done when $b(x,y)=S(y)$. We will handle this issue using specific estimates that will lead us to restrict the time horizon.

The paper is organized as follows. We start by introducing the mathematical setting and main results in Section~\ref{sec:MathSetting}. The proofs are found in the following sections. Section~\ref{sec:LDP} establishes a partial LDP for the averaged empirical measure, which relies on the identification of the good rate function as well as on exponential tightness and upper-bounds on closed sets for the sequence of empirical measures${}^{1}$\footnote{${}^{1}$ In the companion paper~\cite{cabana-touboul:15} we only proved tightness and upper-bounds for compact sets to avoid any constraint on time horizon.}. In Section~\ref{sec:LimitIdentification}, we demonstrate that the good rate function admits a unique minimum $Q$ and prove the averaged and quenched convergence of the empirical measure towards $Q$ using the methodology introduced in~\cite{cabana-touboul:15}. Eventually, we discuss a few perspectives as well as some open research directions in the conclusion.

\section{Mathematical setting and statement of the results}\label{sec:MathSetting}

The general mathematical setting was introduced in \cite[Section 2]{cabana-touboul:15}. The aim of this second part is to cope with complex interactions of the form $b(x,y)$ which depend on the state of both particles. 

In order to expedite the analysis, we neglect spatial aspects already addressed in the first part, and particularly (i) consider the synaptic coefficients identically distributed with law $\mathcal{N}\big(\frac{\bar{J}}{N}, \frac{\sigma^2}{N} \big)$, (ii) diffusion coefficients independent of space and (iii) no interaction delay. Formally, this amounts assuming in the general framework of \cite[Section 2]{cabana-touboul:15} that $J(r,r') \equiv \bar{J} \in \R$, $\sigma(r,r') \equiv \sigma \in \R^*_+$ and $\lambda(r) \equiv \lambda>0$ and $\tau(r,r') \equiv 0$. Therefore, the initial conditions are real variables ($C_{\tau}=\R$) and the trajectories belong to $\C = \C \big( [0,T], \R \big)$.

Our results will hold under the condition that the horizon of time $T$ is such that 
\begin{equation}\label{TimeCondition}
\frac{2\sigma^2 \bmax^2 T}{\lambda^2} < 1.
\end{equation}
Compared to the results of the companion paper~\cite{cabana-touboul:15}, this condition also proving stronger results on $Q^N\big(\hat{\mu}_N \in \cdot \big)$: an  \emph{exponential} tightness (Theorem~\ref{thm:Convergence}) and an upper-bound for \emph{closed sets} (and not restricted to compact sets, Theorem~\ref{thm:WeakLDP}).

These may be summarized as follows:
\begin{theorem}\label{thm:Convergence}
For $T$ small enough for inequality~\eqref{TimeCondition} to hold, there exists a unique double-layer probability distribution $Q \in \M_1^+(\C \times D)$ such that:
\[
Q^{N}(\hat{\mu}_{N}\in \cdot) \overset{\mathcal{L}}{\to} \delta_Q(\cdot) \in \M_1^+\big(\M_1^+(\C \times D)\big),
\] 
exponentially fast.
\end{theorem}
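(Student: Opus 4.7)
The plan is to combine a full large-deviations principle (LDP) for $(\hat\mu_N)$ under $Q^N$ with the identification of a unique zero of the associated good rate function $H$; exponentially fast convergence to $\delta_Q$ then follows by a standard LDP-to-concentration argument. The closed-set upper bound and the identification of the candidate rate function $H$ are furnished by Theorem~\ref{thm:WeakLDP}, so the first order of business is to prove exponential tightness of $(Q^N\circ\hat\mu_N^{-1})_{N\ge 1}$ on $\M_1^+(\C\times D)$. Via an Arzelà--Ascoli type criterion, this reduces to exhibiting a functional $\Phi$ on $\C$ with compact sublevel sets such that $\sup_N \frac{1}{N}\log \Exp^{Q^N}\!\bigl[\exp(N\langle\hat\mu_N,\Phi\rangle)\bigr]<\infty$, the natural candidates being modulus-of-continuity and supremum-type functionals. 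Decomposing the trajectory from~\eqref{eq:Standard equation}, the drift $f$ and the Brownian term pose no difficulty, since $f$ is Lipschitz and Brownian moduli are Gaussian. The interaction term $\sum_j J_{ij}\int_0^t b(X^{i,N}_s, X^{j,N}_s)\,ds$ is the real obstacle: conditioning on the trajectories and using $J_{ij}\sim\mathcal{N}(\bar J/N,\sigma^2/N)$ makes it a conditionally Gaussian process with quadratic variation controlled by $\sigma^2\bmax^2 T$, and a Cauchy--Schwarz decoupling between disorder and Brownian noise yields exponential moments proportional to $\exp(c\,\sigma^2\bmax^2 T/\lambda^2)$, finite precisely under condition~\eqref{TimeCondition}.

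Next, I would establish existence and uniqueness of the minimizer of $H$. Existence is immediate from compactness of the sublevel sets of the good rate function. For uniqueness, following the strategy of~\cite{cabana-touboul:15}, I would characterize the minimizers of $H$ as laws of solutions of the self-consistent, non-Markovian mean-field SDE obtained as the formal $N\to\infty$ limit of~\eqref{eq:Standard equation}, and show that the associated fixed-point map is a contraction on a suitable Wasserstein-type space over $\C\times D$. The contraction constant is again governed by $\sigma^2\bmax^2 T/\lambda^2$, so~\eqref{TimeCondition} once more produces a unique fixed point $Q$; a direct Girsanov-type computation verifies $H(Q)=0$, hence $Q$ is the unique zero of $H$.

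The final step is a standard LDP-to-concentration argument. Given the full LDP with unique zero $Q$, any open neighborhood $U$ of $\delta_Q$ in $\M_1^+(\M_1^+(\C\times D))$ satisfies $\alpha_U:=\inf_{\mu\notin U} H(\mu) > 0$ by compactness of the sublevel sets of $H$ together with uniqueness of its zero. Applying the closed-set upper bound to $U^c$ yields $Q^N(\hat\mu_N\notin U)\le \exp(-N\alpha_U/2)$ for $N$ large, which is exactly the exponentially fast convergence to $\delta_Q$ claimed in the theorem.

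The principal obstacle is clearly the exponential tightness step. When $b(x,y)=S(y)$ one can isolate exponential martingales and directly use Girsanov's theorem to absorb the disorder integral into a density whose moments factorize particle by particle. For general $b(x,y)$ the would-be Girsanov exponent becomes a non-factorizable functional of all $N$ trajectories, so interaction and disorder can no longer be cleanly disentangled. The substitute Gaussian-decoupling estimate only closes up under the quantitative restriction~\eqref{TimeCondition}, which is both the reason this condition appears in the statement and the technical source of the small-time-horizon limitation of all the results of this paper.
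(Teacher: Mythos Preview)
Your overall architecture and the final LDP-to-concentration step are correct and match the paper: once the closed-set upper bound and the uniqueness of the zero of $H$ are in hand, applying the upper bound to $B(Q,\delta)^c$ gives $\limsup_N\frac1N\log Q^N(\hat\mu_N\notin B(Q,\delta))<0$, which is exactly what the paper does. (Minor slip: $U$ should be a neighborhood of $Q$ in $\M_1^+(\C\times D)$, not of $\delta_Q$ in $\M_1^+(\M_1^+(\C\times D))$.) The uniqueness step is also in the right spirit: the paper characterizes minimizers as fixed points of $\mu\mapsto Q_\mu$ and runs a Picard/Gronwall contraction in Wasserstein distance.

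The substantive divergence is in exponential tightness, and your proposed route has a gap. You suggest controlling $\Exp^{Q^N}[\exp(N\langle\hat\mu_N,\Phi\rangle)]$ for a modulus-of-continuity functional $\Phi$ by decomposing the trajectory of~\eqref{eq:Standard equation} and handling the interaction term via ``conditioning on the trajectories'' so that $\sum_j J_{ij}\,b(X^{i,N}_s,X^{j,N}_s)$ becomes conditionally Gaussian. But under $Q^N$ the trajectories $X^{\cdot,N}$ are themselves functionals of the disorder $(J_{ij})$, so this conditioning is circular: you cannot freeze $X$ and treat the $J_{ij}$ as independent Gaussians without first decoupling them. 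The paper sidesteps this entirely by never working under $Q^N$ directly. It uses the Girsanov representation $dQ^N/dP^{\otimes N}=\exp\{N\bar\Gamma(\hat\mu_N)\}$ (Lemma~\ref{lemma2}), under which the $x^i$ are i.i.d.\ with law $P$ and the disorder has already been averaged into $\E_\gamma$; then a H\"older split
\[
Q^N(\hat\mu_N\notin K_M)\le \Big(\int e^{pN\bar\Gamma(\hat\mu_N)}\,dP^{\otimes N}\Big)^{1/p}\, P^{\otimes N}(\hat\mu_N\notin K_M)^{1/q}
\]
reduces tightness to (i) Sanov-type exponential tightness of $P^{\otimes N}$ and (ii) an exponential moment of the density, which is exactly where the Gaussian identity~\eqref{eq:GaussianExponentialQuadraticMoment} and condition~\eqref{TimeCondition} enter. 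This change-of-measure maneuver is the missing idea in your sketch; once you adopt it, the rest of your outline goes through.
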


The existence of $Q$ and the exponential convergence results follow from three points: (i) the exponential tightness of the sequence $Q^N\big(\hat{\mu}_N \in \cdot \big)$, (ii) a partial LDP for the empirical measure relying on an upper-bound for closed sets, and (iii) a characterization of the set of minima of the good rate function. 
\begin{theorem}[Partial Large Deviation Principle]\label{thm:WeakLDP} $\ $\\
For $T$ small enough for inequality~\eqref{TimeCondition} to hold,
\begin{enumerate}
\item for any real number $M \in \R$, there exists a compact subset $K_M$ such that for any integer $N$,
\[
\frac{1}{N} \log Q^{N}(\hat{\mu}_N \notin K_M)\leq -M.
\]
\item there exists a good rate function $H: \M_1^+ (\C \times D)$ such that for any closed subset $F$ of $\M_1^+ (\C \times D)$:
\[
\limsup_{N\to\infty} \frac 1 N \log Q^N(\hat{\mu}_{N}\in F )\leq -\inf_{F} H.
\]
\end{enumerate}
\end{theorem}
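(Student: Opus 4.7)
My plan is to follow the averaged Girsanov-plus-Varadhan scheme of Ben Arous--Guionnet as adapted in~\cite{cabana-touboul:15}. Let $P^N$ denote the law on $\C^N$ of the uncoupled system ($N$ independent trajectories driven by $f(r_i,t,\cdot)\,dt+\lambda\,dW^i_t$ with the prescribed initial conditions). For each realization of the disorder $J$, Girsanov yields
\[
\frac{dQ^N_J}{dP^N}=\exp\!\left(\frac{1}{\lambda}\sum_{i=1}^N\int_0^T B^i_J(s)\,dW^i_s-\frac{1}{2\lambda^2}\sum_{i=1}^N\int_0^T B^i_J(s)^2\,ds\right),\quad B^i_J(s)=\sum_{j} J_{ij}\,b(X^i_s,X^j_s).
\]
Averaging over the independent Gaussian coefficients $J_{ij}\sim\mathcal N(\bar J/N,\sigma^2/N)$ and completing the square produces an averaged Radon--Nikodym density $dQ^N/dP^N$ that is a measurable functional of the empirical measure $\hat\mu_N$, of the form $\exp\bigl(N\,\Gamma_T(\hat\mu_N)+o(N)\bigr)$. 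The quadratic term $B^i_J(s)^2$ contributes a non-local-in-$\mu$ factor quartic in $b$, while the linear stochastic-integral term, after Gaussian marginalization, introduces an exponent driven by $\sigma^2\int_0^T b^2\,ds$ along the diagonal of $\hat\mu_N\otimes\hat\mu_N$.

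For exponential tightness (part (1)), compactness in $\M_1^+(\C\times D)$ reduces by a standard exponential-tightness criterion for empirical measures to uniform control of functionals of a single trajectory, typically of $\sup_{t\le T}|X^{i,N}_t|$ and its H\"older modulus, which in turn comes from Arzel\`a--Ascoli. Reweighting by $dQ^N/dP^N$ and using the Gaussian structure of the averaged interaction, each particle under $Q^N$ effectively receives, in addition to its $\lambda$-Brownian noise, a Gaussian feedback of variance at most $\sigma^2\bmax^2$. Standard Gaussian tail bounds give finite exponential moments of $\sup_{t\le T}|X^{i,N}_t|^2$ precisely when $2\sigma^2\bmax^2 T/\lambda^2<1$, that is, under~\eqref{TimeCondition}. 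A Chebyshev argument then produces, for any $M$, a compact $K_M$ satisfying $\tfrac{1}{N}\log Q^N(\hat\mu_N\notin K_M)\le -M$.

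For the upper bound on closed sets (part (2)), I combine the factorization $dQ^N/dP^N=\exp\bigl(N\Gamma_T(\hat\mu_N)+o(N)\bigr)$ with Sanov's theorem applied to $\hat\mu_N$ under $P^N$, whose rate is the relative entropy $I(\cdot\|P_0)$ with respect to the law $P_0$ of a single uncoupled particle jointly with its type variable. A Varadhan-type estimate then yields, on any compact $K\subset\M_1^+(\C\times D)$,
\[
\limsup_N\frac{1}{N}\log Q^N(\hat\mu_N\in K)\le -\inf_{\mu\in K}\bigl\{I(\mu\|P_0)-\Gamma_T(\mu)\bigr\}=:-\inf_K H.
\]
Upper semicontinuity of $\Gamma_T$ and the good-rate-function properties of $I$ make $H$ a good rate function. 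The exponential tightness of part (1) then upgrades the upper bound from compact to arbitrary closed sets in the standard way.

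The hard part is the control of $\Gamma_T$ itself and, in particular, its upper-boundedness and the exchange of limits in the Varadhan step. Unlike the case $b(x,y)=S(y)$ treated in~\cite{cabana-touboul:15}, here the quadratic Girsanov term couples the index $i$ to sums over $j$ through both slots of $b$, so after the Gaussian average over $J$ one cannot isolate a clean exponential martingale in $\hat\mu_N$: the resulting expression involves quartic integrals in $b$ that must be controlled uniformly in $\mu$. The same estimate that yields finiteness of these integrals is what forces~\eqref{TimeCondition}, and it is precisely this condition that simultaneously controls the exponential tails of the trajectories in part (1) and the boundedness of $\Gamma_T$ in part (2).
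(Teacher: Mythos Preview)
Your overall framework (Girsanov on the coupled system, average over the Gaussian disorder, then Sanov for $P^{\otimes N}$ plus a Varadhan-type transfer) matches the paper, and the paper indeed establishes that the averaged density is \emph{exactly} a function of the empirical measure, $dQ^N/dP^{\otimes N}=\exp\{N\bar\Gamma(\hat\mu_N)\}$, with no $o(N)$ correction. However, the paper's execution of both parts differs from yours in substantive ways, and in each case your sketch has a gap.

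For part~(1), the paper does \emph{not} estimate trajectory tails under $Q^N$ directly. Your ``Gaussian feedback of variance at most $\sigma^2\bmax^2$'' heuristic is not rigorous: under $Q^N$ the particles are coupled through the full empirical measure, so there is no single-particle tail estimate to invoke uniformly in $N$. Instead, the paper takes the compact $K_M$ supplied by Sanov for $P^{\otimes N}$ and transfers it via H\"older:
\[
Q^N(\hat\mu_N\notin K_M)\le\Big(\int e^{pN\bar\Gamma(\hat\mu_N)}\,dP^{\otimes N}\Big)^{1/p}\,P^{\otimes N}(\hat\mu_N\notin K_M)^{1/q},
\]
and the whole work is to show $\int e^{pN\bar\Gamma(\hat\mu_N)}\,dP^{\otimes N}\le e^{N(p-1)c_T}$ for $p>1$ close to $1$. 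This uses independent $\gamma$-copies of $X^{\hat\mu_N}$, a martingale/H\"older trick, Jensen, and the Gaussian identity~\eqref{eq:GaussianExponentialQuadraticMoment}, and it is exactly here that the time condition~\eqref{TimeCondition} enters. For part~(2), you rely on ``upper semicontinuity of $\Gamma_T$'', but the paper neither proves nor uses this. The Varadhan step is instead made rigorous by a \emph{localization}: on each small ball $B(\nu,\delta)$ one replaces $\bar\Gamma(\hat\mu_N)$ by the linear functional $\bar\Gamma_\nu(\hat\mu_N)$ and controls the error via Lemma~\ref{lemma3.1} (an estimate of $\int_{B(\nu,\delta)}\E_\gamma[\exp\{q(\tilde X^{\hat\mu_N,i}-\tilde X^{\nu,i})\}\exp\{\tilde X^{\nu,i}\}]\,dP^{\otimes N}\le e^{NC_q(\delta)}$ with $C_q(\delta)\to0$), together with the continuity estimate $|\Gamma_\nu(\mu)-\Gamma(\mu)|\le C_T(1+I(\mu|P))\,d_T^V(\mu,\nu)$ of Theorem~\ref{lemma1}. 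That continuity estimate is the genuinely new difficulty in the state-dependent case and requires a stochastic Fubini/Dambis--Dubins--Schwarz argument (Lemma~\ref{lemma3}) that your sketch does not address. Goodness of $H$ then follows from Proposition~\ref{prop:GammaBehaviour}(2), namely $|\Gamma(\mu)|\le\iota I(\mu|P)+e$ with $\iota<1$, again under~\eqref{TimeCondition}.
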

This theorem is proved in section~\ref{sec:LDP}.

\begin{theorem}[Minima of the rate function]\label{thm:Limit}
  The good rate function $H$ achieves its minimal value at a unique probability measure $Q \in \M_1^+ (\C \times D)$ satisfying:
    \[Q \simeq P, \qquad \der{Q}{P}(x,r)=\E \left[\exp\bigg\{\frac 1 {\lambda}\int_0^T G_t^{Q}(x) dW_t(x,r) - \frac 1 {2\lambda^2} \int_0^T (G_t^{Q}(x))^2 dt\bigg\} \right]\]
    where $(W_t(.,r))_{t\in [0,T]}$ is a $P_r$-Brownian motion, and $G^{Q}(x)$ is a $(\tilde{\Omega},\tilde{\F},\mathcal{P})$, a Gaussian process with mean:
    \[
    \E [G^{Q}_t(x)] = \int_{\C \times D} \bar{J}  b(x_t,y_t)dQ(y,r') 
    \]
    and covariance:
    \[
    \E [G^{Q}_t(x)G^{Q}_s(x)] = \int_{\C \times D} \sigma^2 b(x_t, y_t)b(x_s, y_s)dQ(y,r').
    \]
\end{theorem}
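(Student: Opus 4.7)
The plan is to follow the methodology of~\cite{ben-arous-guionnet:95, cabana-touboul:15}: identify the minima of $H$ as fixed points of a self-consistency map $\Phi$ built from the Girsanov-type density appearing in the statement, and then establish uniqueness via a Banach-type contraction argument exploiting the time-horizon condition~\eqref{TimeCondition}. Concretely, I would define $\Phi: \M_1^+(\C \times D) \to \M_1^+(\C \times D)$ as the map sending $Q$ to the measure absolutely continuous with respect to $P$ whose density is given by the expression in the statement. Well-posedness of $\Phi(Q)$ relies on the annealed exponential being $P$-integrable, which follows from $\bmax<\infty$ and from standard Gaussian exponential-moment estimates applied, via Fubini, on the auxiliary probability space carrying $G^Q$; the prefactor $2\sigma^2 \bmax^2 T/\lambda^2$ already appears naturally here.

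Next, using the explicit variational form of $H$ derived from the partial LDP in Section~\ref{sec:LDP}, I would show that $H$ can be rewritten, after dualising the quadratic drift terms by an auxiliary Gaussian integration and applying Fubini, as a relative entropy $H(Q) = H(Q \,|\, \Phi(Q))$, up to terms vanishing at a fixed point. Non-negativity of the relative entropy, together with its vanishing only on the diagonal, implies that the minima of $H$ are precisely the fixed points of $\Phi$, all with $H(Q) = 0$. Existence of at least one minimum then follows from the good-rate-function property (compactness of level sets and lower semicontinuity).

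For uniqueness I would equip $\M_1^+(\C \times D)$ with a Wasserstein-type distance $d_T$ on time-marginals up to horizon $T$ and prove that $\Phi$ is a strict $d_T$-contraction. Comparing $\Phi(Q_1)$ and $\Phi(Q_2)$ reduces to comparing the Gaussian processes $G^{Q_1}$ and $G^{Q_2}$; since both their mean and covariance depend linearly on $Q$ through the bounded kernel $b$, one obtains, after invoking It\^o isometry on the stochastic integral term and Pinsker's inequality on the Radon--Nikodym derivatives, an estimate of the form
\[
d_T\bigl(\Phi(Q_1), \Phi(Q_2)\bigr)^2 \leq \frac{2\sigma^2 \bmax^2 T}{\lambda^2}\, d_T(Q_1, Q_2)^2.
\]
Condition~\eqref{TimeCondition} is exactly what makes this coefficient strictly smaller than one, and Banach's fixed-point theorem then yields a unique fixed point, which coincides with the unique minimum of $H$.

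The main obstacle is the contraction step. In the case $b(x,y) = S(y)$ of~\cite{cabana-touboul:15}, the Girsanov exponential is a genuine martingale in $x$ whose integrand does not depend on $x$, making the comparison essentially linear in $Q$. Here $G^Q$ depends on $x$ through $b(x_t,\cdot)$, so $d\Phi(Q)/dP$ is an annealed average of non-adapted Gaussian exponentials, and one must simultaneously control the drift \emph{and} the full covariance of the auxiliary process when passing from $Q_1$ to $Q_2$. Tracking sharp constants through this double comparison is what pins down the factor $2\sigma^2 \bmax^2 T/\lambda^2$ and forces the small-time restriction~\eqref{TimeCondition}; any looser bookkeeping would give a more restrictive condition on $T$.
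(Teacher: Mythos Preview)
Your high-level plan---minima of $H$ are fixed points of $\mu\mapsto Q_\mu$, then prove the fixed point is unique---is exactly the paper's, and the identity $H(\mu)=I(\mu\mid Q_\mu)$ you invoke is precisely Theorem~\ref{thm:Qnu} combined with Lemma~\ref{Qcharac}. The gap is in your uniqueness argument. You propose a direct Banach contraction on the Wasserstein distance with factor $2\sigma^2\bmax^2 T/\lambda^2$, obtained from Pinsker's inequality and It\^o isometry applied to the densities. But Pinsker controls total variation, not Wasserstein distance on the unbounded path space $\C$, so no coupling estimate follows; and the annealed density $dQ_\mu/dP$ is not a Girsanov exponential whose integrand is linear in $\mu$: after the $\gamma$-expectation it is a Dol\'eans exponential driven by the nonlinear drift $O_\mu(t,x,r)=\E_\gamma\big[\Lambda_t(G^\mu(x))\,G^\mu_t(x)\,L^\mu_t(x,r)\big]+m_\mu(t,x)$, so there is no reason the Lipschitz constant should be the clean one you write down.

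The paper does not attempt a one-step contraction. It uses the representation above together with Girsanov to realize $Q_{\mu,r}$ as the law of the strong solution $x^\mu(r)$ of the auxiliary SDE~\eqref{SDE:uniqueness}, and then compares the \emph{trajectories} $x^\mu(r)$, $x^\nu(r)$ driven by the same Brownian motion and initial condition. After substantial Gaussian estimates (Isserlis, control of $\Lambda_t$ via~\eqref{ineq:BoundOnLambda}--\eqref{ineq:DiffLambda}, Burkholder--Davis--Gundy for the stochastic-integral terms) one obtains the integrated bound $\Exp\big[\|x^\mu(r)-x^\nu(r)\|_{\infty,t}^2\big]\le C_T\int_0^t d_s^V(\mu,\nu)^2\,ds$, and a Picard iteration on this inequality yields uniqueness---for \emph{any} finite $T$. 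So you have also mislocated the role of~\eqref{TimeCondition}: in the paper it is needed to make $H$ a good rate function and to obtain exponential tightness (Proposition~\ref{prop:GammaBehaviour}(2), Theorem~\ref{lemma1}(2), Theorem~\ref{thm:WLDP}), not for uniqueness of the minimum.
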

This theorem will be demonstrated in section~\ref{sec:LimitIdentification}. Combining both results, the general result of Sznitman~\cite[Lemma 3.1]{sznitman:84} implies that:
\begin{theorem}[Propagation of chaos]\label{thm:PropagationOfChaos}
	For $T$ small enough for inequality~\eqref{TimeCondition} to hold, $Q^N$ is $Q$-chaotic in the sense that for any  $m \in \N^*$, any collection of bounded continuous functions $\f_1,\ldots,\f_m: \C \times D \to \R$ and any set of nonzero distinct integers $k_1,\ldots, k_m$, we have:
	\[
	\lim_{N\to\infty}\int_{\big(\C \times D \big)^N} \prod_{j=1}^m \f_j(x^{k_j},r_{k_j})dQ^N(\mathbf{x},\mathbf{r}) = \prod_{j=1}^m \int_{\C \times D} \f_j(x,r)dQ(x,r). 
	\]
\end{theorem}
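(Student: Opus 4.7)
The plan is to derive the chaoticity from the convergence of the empirical measure established in Theorem~\ref{thm:Convergence} via the classical equivalence due to Sznitman~\cite[Lemma 3.1]{sznitman:84}. That lemma states that, for a sequence of exchangeable probability measures on a product Polish space, $Q$-chaoticity is equivalent to the convergence in distribution of the empirical measures $\hat{\mu}_N$ towards the Dirac mass $\delta_Q$. Since Theorem~\ref{thm:Convergence} already delivers this convergence (in fact exponentially fast, which is much stronger than needed), the only point requiring attention is exchangeability.

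The first step is therefore to verify that $Q^N$ is exchangeable on $(\C \times D)^N$. This is a structural consequence of the construction of~\eqref{eq:Standard equation}: the initial conditions $(X^{i,N}_0)_i$, the type variables $(r_i)_i$, the driving Brownian motions $(W^i)_i$, and the synaptic coefficients $(J_{ij})_{i,j}$ are all drawn i.i.d.\ in the relevant sense, and the drift of particle $i$ depends on the rest of the system only through the symmetric sum $\sum_j J_{ij} b(X^{i,N}_t, X^{j,N}_t)$. Relabeling indices by any permutation $\sigma$ therefore yields a system with the same joint law, and averaging out the disorder to obtain $Q^N$ preserves this symmetry. It is important here to regard the pair $(x^i, r_i)$ as a single coordinate, so that Sznitman's lemma is applied on the enlarged Polish product space rather than on $\C^N$ alone.

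Having established exchangeability, the second step is the bare application of Sznitman's lemma: for any $m \in \N^*$, any bounded continuous $\f_1, \ldots, \f_m$ on $\C \times D$, and any distinct indices $k_1, \ldots, k_m$, the factorization of the marginals asserted by the theorem follows at once from $\hat{\mu}_N \Rightarrow \delta_Q$ under $Q^N$. There is no genuine obstacle at this stage; the substantive probabilistic and analytic content — exponential tightness, the partial LDP, and the identification of the unique minimizer of the rate function — has already been dispatched in Theorems~\ref{thm:WeakLDP} and~\ref{thm:Limit}, so this final corollary is essentially bookkeeping and an invocation of a standard result.
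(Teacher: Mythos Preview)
Your proposal is correct and follows exactly the route taken in the paper, which simply states that combining Theorems~\ref{thm:WeakLDP} and~\ref{thm:Limit} with Sznitman's~\cite[Lemma 3.1]{sznitman:84} yields the propagation of chaos. Your explicit verification of exchangeability of $Q^N$ is a welcome detail that the paper leaves implicit.
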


Our results partially extends to the quenched case as stated in the following theorem:
\begin{theorem}[Quenched results]\label{thm:Quenched}
For $T$ small enough for inequality~\eqref{TimeCondition} to hold, we have the following quenched upper-bound:
\[
\mathcal{P}-a.s., \; \; \forall \text{ closed } F \subset \M_1^+(\C \times D), \; \; \limsup_{N\to\infty} \frac 1 N \log Q^N_{\mathbf{r}}(J)(\hat{\mu}_{N}\in F )\leq -\inf_{F} H,
\]
where $H$ is the good rate function introduced in theorem~\ref{thm:WeakLDP}. 
In particular, for almost every realization of $\mathbf{r}$ and $J$, $Q_{\mathbf{r}}^N(J)(\hat{\mu}_N \in \cdot)$ is exponentially tight and converges in law toward $\delta_Q$ exponentially fast. Eventually, this implies $\mathcal{P}$-almost sure convergence of the empirical measure to $Q$.
\end{theorem}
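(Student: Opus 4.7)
The plan is to upgrade the averaged upper bound of Theorem~\ref{thm:WeakLDP} to a $\mathcal{P}$-almost-sure statement via Gaussian concentration of measure on the disorder coupled with a Borel--Cantelli argument, following the strategy of the companion paper~\cite{cabana-touboul:15}. Decomposing $J_{ij}=\bar{J}/N+(\sigma/\sqrt{N})\,g_{ij}$ with $(g_{ij})_{1\leq i,j\leq N}$ i.i.d.\ standard normal, for each measurable $A\subset\M_1^+(\C\times D)$ I would study the random variable
\[
\Phi_N^A(J,\mathbf{r}):=\frac{1}{N}\log Q^N_{\mathbf{r}}(J)(\hat{\mu}_N\in A),
\]
viewed as a measurable function of the Gaussian vector $(g_{ij})\in\R^{N^2}$ and of the i.i.d.\ labels $\mathbf{r}$.

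The first step is to show that, with $\mathbf{r}$ frozen, $\Phi_N^A$ is Lipschitz in $(g_{ij})$ with a deterministic constant of order $C/\sqrt{N}$, where $C$ depends only on $\sigma,\lambda,\bmax,T$. I would obtain this through a Girsanov computation: two disorders $J,J'$ produce mutually absolutely continuous path laws on $\C^N$, the log-density being a stochastic integral whose integrand involves $(J_{ij}-J'_{ij})\,b(X^{i,N},X^{j,N})/\lambda$. Using $|b|\leq\bmax$, the $\sigma/\sqrt{N}$ normalization of the coefficients, and the a priori exponential moment estimates on the trajectories produced in the proof of Theorem~\ref{thm:WeakLDP} under~\eqref{TimeCondition}, one reaches the desired per-coordinate bound, whose $\ell^2$-aggregate over the $N^2$ variables yields the Lipschitz constant $C/\sqrt{N}$. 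The Borell--Tsirelson--Ibragimov--Sudakov inequality then gives, for every $\delta>0$,
\[
\mathcal{P}\Bigl(\bigl|\Phi_N^A-\Exp[\Phi_N^A\mid\mathbf{r}]\bigr|>\delta\Bigr)\leq 2\exp(-c\,N\delta^2),
\]
for some constant $c>0$. An analogous bounded-differences argument applied to the i.i.d.\ coordinates of $\mathbf{r}$ removes the conditional expectation at the same exponential rate.

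The second step controls the annealed quantity. By Jensen's inequality, $\Exp[\Phi_N^A]\leq \frac{1}{N}\log Q^N(\hat{\mu}_N\in A)$, so taking $A=F$ closed (resp.\ $A=K_M^c$ for the compact $K_M$ of Theorem~\ref{thm:WeakLDP}) and invoking the averaged upper bound (resp.\ exponential tightness) produces $\limsup_N \Exp[\Phi_N^A]\leq -\inf_F H$ (resp.\ $\leq -M$). Combining this with the concentration inequality above and summing the exponentially small probabilities over $N$, the Borel--Cantelli lemma yields simultaneously the $\mathcal{P}$-a.s.\ quenched upper bound
\[
\limsup_{N\to\infty}\Phi_N^F(J,\mathbf{r})\leq -\inf_F H
\]
and the $\mathcal{P}$-a.s.\ exponential tightness of $Q^N_{\mathbf{r}}(J)(\hat{\mu}_N\in\cdot)$. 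Given the uniqueness of the minimizer $Q$ of $H$ provided by Theorem~\ref{thm:Limit}, these two statements combine exactly as in the proof of Theorem~\ref{thm:Convergence} to produce exponentially fast convergence of $Q^N_{\mathbf{r}}(J)(\hat{\mu}_N\in\cdot)$ to $\delta_Q$ for $\mathcal{P}$-almost every realization of $(J,\mathbf{r})$, and hence the $\mathcal{P}$-a.s.\ convergence of the empirical measure to $Q$.

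The principal obstacle is the Lipschitz estimate of $\Phi_N^A$ itself. The state-dependent nature of $b(x,y)$ means that perturbing a single coefficient $J_{ij}$ propagates nonlinearly through all trajectories of the system simultaneously, so the Girsanov exponent cannot be reduced to a single exponential martingale in one coordinate as in the case $b(x,y)=S(y)$ handled in~\cite{cabana-touboul:15}. The time-horizon restriction~\eqref{TimeCondition} reappears here precisely because it underpins the Novikov-type integrability of the relevant change-of-measure density and the control of the quadratic variation needed to convert the per-coordinate perturbation bound into a Lipschitz constant of the correct order in $N$.
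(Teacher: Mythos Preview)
Your route via Gaussian concentration is genuinely different from the paper's and considerably more laborious. The paper does not attempt any Lipschitz control of $\Phi_N^A$ in the disorder; it simply leverages the \emph{averaged} upper bound of Theorem~\ref{thm:WeakLDP} together with Markov's inequality: since $Q^N=\E_{\mathcal P}[Q^N_{\mathbf r}(J)]$, for a fixed closed $F$ and any $\varepsilon>0$ one has, for large $N$,
\[
\mathcal P\Big(Q^N_{\mathbf r}(J)(\hat\mu_N\in F)>e^{-N(\inf_F H-2\varepsilon)}\Big)\leq e^{N(\inf_F H-2\varepsilon)}\,Q^N(\hat\mu_N\in F)\leq e^{-N\varepsilon},
\]
which is summable, so Borel--Cantelli gives the quenched bound for this $F$ (this is the argument of~\cite[Theorem~2.7, Appendix~C]{ben-arous-guionnet:95}). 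Uniformity over all closed sets is then obtained by a countable base of closed sets in the Polish space $\M_1^+(\C\times D)$, and quenched exponential tightness is recovered a posteriori from the goodness of $H$ via~\cite[Exercise~4.1.10(c)]{dembo-zeitouni:09}. None of this touches the disorder pathwise.

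Your concentration argument could in principle lead to the same conclusion, but it hinges on the Lipschitz estimate for $(g_{ij})\mapsto\Phi_N^F$ with constant $O(N^{-1/2})$, which you yourself flag as the ``principal obstacle'' and do not actually establish. In the state-dependent setting the Girsanov exponent between two disorders is not a priori controlled by a single-coordinate perturbation uniformly in the trajectories, and invoking~\eqref{TimeCondition} alone is not a proof. You also omit the countable-base step needed to pass from ``for each fixed $F$, $\mathcal P$-a.s.'' to ``$\mathcal P$-a.s., for all closed $F$''. In short, the paper's Markov/Borel--Cantelli route is both complete and far more elementary, whereas your proposal leaves its hardest step open.
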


\section{Large deviation principle}\label{sec:LDP}
This section is devoted to proving the existence of a partial large deviations principle for the averaged empirical measure. We start by constructing the appropriate good rate function before obtaining an upper-bound and an exponential tightness result. Many points of the proof proceed as in the companion paper \cite{cabana-touboul:15} or as in precedent works \cite{ben-arous-guionnet:95,guionnet:97,cabana-touboul:12}. To avoid reproducing fastidious demonstrations we will often refer to these contributions and focus our attention on the new difficulties arising in this state-dependent interactions setting.

\subsection{Construction of the good rate function}\label{sec:GoodRate}

For $\mu \in {\M_1^+(\C \times D)}$, we define the two following functions respectively on $[0,T]^2 \times \C$ and $[0,T] \times \C$:
\begin{equation*}
\begin{cases}
	K_{\mu}(s,t,x)&:=\displaystyle{\frac{\sigma^2}{\lambda^2} \int_{\C \times D}  b(x_t,y_t)b(x_s,y_s) d\mu(y,r')}\\
	m_{\mu}(t,x) &:= \displaystyle{\frac{\bar{J}}{\lambda} \int_{\C \times D}  b(x_t,y_t)d\mu(y,r')}.\\
\end{cases}
\end{equation*}
Both functions are well defined as $(y,r) \to b(x_t,y_t)b(x_s,y_s)$ and $(y,r) \to b(x_t,y_t)$ are continuous for the uniform norm on $\C \times D$, and $\mu$ is a Borel measure. They are bounded: $|K_{\mu}(s,t,x)| \leq \frac{\sigma^2 \bmax^2}{\lambda^2}$ and $|m_{\mu}(t,x)| \leq \frac{\bar{J} \bmax}{\lambda}$. They are also continuous by the dominated convergence theorem. 

Since $K_{\mu}$ has a covariance structure, we can define a probability space $(\hat{\Omega},\hat{\F},\gamma)$ and a family of stochastic processes $\big(G^{\mu}(x)\big)_{x \in \C, \mu \in \M_1^+(\C \times D)}$ such that $G^{\mu}(x)$ is a centered Gaussian process with covariance $K_{\mu}(.,.,x)$ under measure $\gamma$. We denote $\E_{\gamma}$ the expectation under $\gamma$.

\begin{remark}\label{rem:structureG} $\ $ 
For the sake of measurability under Borel measures of $\M_1^+\big( \C \times D \big)$, it is convenient to choose a family $\big(G^{\mu}_t(x)\big)_{\mu,x}$ regular in $x$, which can be done by constructing the process explicitly as in \cite[Remark 2.14]{guionnet:97}. In detail, for $\mu \in \M_1^+\big(\C \times D\big)$, and $(e^{\mu}_i)_{i \in \N^*}$ an orthonormal basis of $L^2_{\mu}\big( \C \times D \big)$. Let also for any $x \in \C, t \in [0,T]$, $\rho_{t,x} \in L^2_{\mu}\big( \C \times D \big)$ such that $\rho_{t,x}(y,r):=b(x_t,y_t)$. Then the process
\[
G^{\mu}_t(x) : = \sum_{i \in \N} J_i \langle \rho_{t,x}, e^{\mu}_i \rangle_{L^2_{\mu}( \C \times D )} = \sum_{i \in \N} J_i \int_{\C\times D} b(x_t,y_t) e^{\mu}_i(y,r) d\mu(y,r),
\]
where $\big(J_i\big)_{i \in \N^*}$ are independent centered Gaussian variables of the probability space $(\hat{\Omega},\hat{\F},\gamma)$ and with variance $\sigma^2$ provides a regular representation of the process $(G^{\mu}_t(x))_{t\geq 0}$.
\end{remark}

We recall that for any Gaussian process $(G_t)_{t \in [0,T]}$ of $\big(\hat{\Omega}, \hat{\mathcal{F}}, \gamma \big)$, and any $t \in [0,T]$
\begin{equation}\label{def:Lambda}
\Lambda_t(G):=\frac{ \exp\Big\{ - \frac{1}{2} \int_0^t G_s^2 ds \Big\}}{\E_{\gamma} \Big[\exp\Big\{ - \frac{1}{2} \int_0^t G_s^2 du\Big\} \Big]},
\end{equation}
and also define for any $\nu \in \M_1^+\big(\C \times D\big)$, $(x,r) \in \C \times D$ and $t \in [0,T]$
\begin{equation}\label{LnuVnuDef}
L^{\nu}_{t}(x,r) := \int_0^t G^{\nu}_s(x)\Big( dW_s(x,r) - m_{\nu}(s,x) ds\Big), \quad V^{\nu}_t(x,r):= W_t(x,r) - \int_0^t m_{\nu}(s,x) ds.
\end{equation}
Moreover, we introduce
\[
d\gamma_{\widetilde{K}_{\nu,x}^{t}}(\omega) :=  \Lambda_t(G^{\nu}(\omega,x)) d\gamma(\omega), \quad \forall \omega \in \hat{\Omega}.
\]
for any $t \in [0,T], x \in \C, \nu \in \M_1^+(\C \times D)$. It is proven in \cite{neveu:70} that $\gamma_{\widetilde{K}_{\nu,x}^{t}}$ is a probability measure, under which $G^{\nu}(x)$ is a centered Gaussian process with covariance:
\[
\widetilde{K}_{\nu,x}^{t}(s,u):=\E_{\gamma} \bigg[ G^{\nu}_u(x)G^{\nu}_s(x) \Lambda_t\big(G^{\nu}(x)\big) \bigg].
\]

Moreover, let for any fixed $N \in \N^*$, $\forall (\mathbf{x},\mathbf{r}) \in (\C \times D)^N$
\[
X^N_i(\mathbf{x},\mathbf{r}):= \int_0^T G^{i,N}_t(\mathbf{x}) dW_t(x^i,r_i)-\frac{1}{2} \int_0^T {G^{i,N}_t(\mathbf{x})}^2 dt
\]
where $G^{i,N}_t(\mathbf{x}):=\frac{1}{\lambda} \sum_{j=1}^N J_{ij} b(x^i_t,x^j_t)$. As proven in \cite{cabana-touboul:15}, we have the following good properties:

\begin{proposition}\label{LambdaProperties}
Exists a constant $C_T>0$, such that for any $\nu \in \M_1^+\big(\C \times D \big)$, $x \in \C$, $t \in [0,T]$,

\begin{equation}\label{ineq:BoundOnLambda}
	\underset{0\leq s,u \leq t}{\sup} \widetilde{K}_{\nu,x}^{t}(s,u) \leq C_T,\quad 
	\Lambda_t\big(G^{\nu}(x) \big) \leq C_T,
	\end{equation}

	\begin{align}\label{eq:KtildeExpectation}
	\E_{\gamma} \bigg[ \exp\bigg\{-\frac{1}{2} \int_0^T {G^{\nu}_t(x)}^2 dt\bigg\} \bigg] = \exp\Big\{ -\frac{1}{2} \int_0^T \widetilde{K}_{\nu,x}^t(t,t) dt \Big\}.
	\end{align}
Moreover, if $(G_t)_{0\leq t \leq T}$ and $(G'_t)_{0\leq t \leq T}$ are two centered Gaussian processes of $\big(\hat{\Omega},\hat{\mathcal{F}},\gamma\big)$ with uniformly bounded covariance, then exists $\tilde{C}_T>0$ such that for all $t \in [0,T]$,
\begin{align}\label{ineq:DiffLambda}
\big| \Lambda_t(G)-\Lambda_t(G') \big| \leq \tilde{C}_T  \bigg\{ \int_0^t \E_{\gamma} \Big[ \big(G_s - {G'_s}\big)^2 \Big]^{\frac{1}{2}} ds + \int_0^t \big|G_s^2 - {G'_s}^2\big| ds \bigg\}.
\end{align}
\end{proposition}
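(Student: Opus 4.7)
The three claims all rest on a single structural observation: for every $\nu \in \M_1^+(\C \times D)$ and every $x \in \C$, the Gaussian process $G^{\nu}(x)$ has covariance $K_{\nu}(\cdot,\cdot,x)$ bounded uniformly by $\sigma^2\bmax^2/\lambda^2$, independently of $\nu$ and $x$. This uniform bound is what produces a constant depending only on $T$, $\sigma$, $\bmax$ and $\lambda$ in each of the three estimates.

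First I would establish a positive lower bound on the denominator of $\Lambda_t$. Applying Jensen's inequality to the convex map $u\mapsto e^{-u}$ under $\gamma$ gives
\[
\E_{\gamma}\Big[\exp\Big\{-\tfrac{1}{2}\int_0^t (G^{\nu}_s(x))^2\,ds\Big\}\Big] \,\geq\, \exp\Big\{-\tfrac{1}{2}\int_0^t K_{\nu}(s,s,x)\,ds\Big\} \,\geq\, \exp\Big(-\tfrac{T\sigma^2\bmax^2}{2\lambda^2}\Big).
\]
Since the numerator of $\Lambda_t$ is bounded above by $1$, this yields the pointwise bound $\Lambda_t(G^{\nu}(x)) \leq C_T$; and since $\Lambda_t$ is then a bounded density, H\"older combined with Cauchy--Schwarz gives
\[
\big|\widetilde{K}_{\nu,x}^{t}(s,u)\big| \,\leq\, C_T\cdot\E_{\gamma}\big[|G^{\nu}_s(x)\,G^{\nu}_u(x)|\big] \,\leq\, C_T\cdot\tfrac{\sigma^2\bmax^2}{\lambda^2},
\]
completing \eqref{ineq:BoundOnLambda} after relabeling the constant. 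For \eqref{eq:KtildeExpectation} I would set $\Phi(t) := \log \E_{\gamma}\big[\exp\{-\tfrac{1}{2}\int_0^t (G^{\nu}_s(x))^2\,ds\}\big]$; the preceding bounds legitimize differentiation under the expectation, yielding $\Phi'(t) = -\tfrac{1}{2}\widetilde{K}_{\nu,x}^{t}(t,t)$, and integrating on $[0,T]$ from $\Phi(0) = 0$ before exponentiating gives the identity.

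For the Lipschitz-type estimate \eqref{ineq:DiffLambda}, I would write $\Lambda_t(G) = N_t(G)/D_t(G)$ with $N_t(G) := \exp\{-\tfrac{1}{2}\int_0^t G_s^2\,ds\}$ and $D_t(G) := \E_{\gamma}[N_t(G)]$, then perform the algebraic split
\[
\Lambda_t(G) - \Lambda_t(G') \,=\, \frac{N_t(G) - N_t(G')}{D_t(G)} \,+\, N_t(G')\cdot\frac{D_t(G') - D_t(G)}{D_t(G)\,D_t(G')}.
\]
Each $D_t$ is bounded below by the Jensen step from the previous paragraph applied to the uniformly bounded covariances of $G$ and $G'$. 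The $1$-Lipschitz estimate $|e^{-a}-e^{-b}| \leq |a-b|$ on $[0,\infty)$ turns the numerator differences into the $\int_0^t |G_s^2 - {G'_s}^2|\,ds$ term, and an additional Cauchy--Schwarz
\[
\E_{\gamma}\big[|G_s^2 - {G'_s}^2|\big] \,\leq\, \E_{\gamma}[(G_s - G'_s)^2]^{1/2}\cdot\E_{\gamma}[(G_s + G'_s)^2]^{1/2},
\]
combined with the uniform covariance assumption on the second factor, produces the $\int_0^t \E_{\gamma}[(G_s - G'_s)^2]^{1/2}\,ds$ term.

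The real difficulty is not any individual computation but the uniformity of the constants $C_T$ and $\tilde C_T$ in $\nu$, $x$ and the particular Gaussian processes themselves: only such uniformity will make the estimates usable in Section~\ref{sec:LDP}, where they must hold simultaneously for random empirical measures whose laws are a priori unknown. The uniform sup-norm bound on $b$ is precisely what enables this to go through.
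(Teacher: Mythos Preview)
Your argument is correct. The paper itself does not prove this proposition: it is introduced with the phrase ``As proven in \cite{cabana-touboul:15}, we have the following good properties'', so there is no in-paper proof to compare against. Your route --- Jensen for the uniform lower bound on the denominator, differentiation of $t\mapsto\log\E_\gamma[\exp\{-\tfrac12\int_0^t G_s^2\,ds\}]$ for the identity, and the algebraic split of $\Lambda_t(G)-\Lambda_t(G')$ combined with $|e^{-a}-e^{-b}|\le|a-b|$ and Cauchy--Schwarz for the Lipschitz estimate --- is the standard one used in the companion paper and in the Ben Arous--Guionnet works it builds on, and your emphasis on the uniformity in $(\nu,x)$ coming from the bound $\|K_\nu\|_\infty\le\sigma^2\bmax^2/\lambda^2$ is exactly the point.
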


\begin{lemma}\label{lemma2}
\begin{equation*}
\frac{dQ^N}{dP^{\otimes N}}(\mathbf{x},\mathbf{r}) = \exp{\Big\{ N \bar{\Gamma}(\hat{\mu}_N) \Big\} }.
\end{equation*}
where,
\begin{equation*}
\bar{\Gamma}(\hat{\mu}_N) :=  \frac{1}{N} \sum_{i=1}^N \log \E_{\gamma} \bigg[ \exp\bigg\{ \int_0^T \big(G^{\hat{\mu}_N}_t(x^i)+m_{\hat{\mu}_N}(t,x^i)\big)dW_t(x^i,r_i) - \frac{1}{2} \int_0^T \big(G^{\hat{\mu}_N}_t(x^i)+m_{\hat{\mu}_N}(t,x^i)\big)^2dt\bigg\}  \bigg].
\end{equation*} 
\end{lemma}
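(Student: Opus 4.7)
The plan is to compute the density in two stages: first freeze the disorder $J$ (and the positions $\mathbf{r}$) and apply Girsanov's theorem to pass from $P^{\otimes N}$ to the quenched interacting law, then integrate over $J$ using the row-independence of the Gaussian interaction matrix.

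For the first stage, fix $\mathbf{r}$ and $J$. Under $P^{\otimes N}$ each coordinate satisfies $dX^i_t = f(r_i,t,X^i_t)dt + \lambda\,dW^i_t$, while under the law $Q^N(J,\mathbf{r})$ given by~\eqref{eq:Standard equation} we add the drift $\sum_j J_{ij}b(X^i_t,X^j_t)$. Since $b$ is uniformly bounded by $\bmax$, Novikov's condition is trivially satisfied on the finite horizon $[0,T]$, and Girsanov's theorem yields
\[
\frac{dQ^N(J,\mathbf{r})}{dP^{\otimes N}}(\mathbf{x}) = \exp\Big\{\sum_{i=1}^N \int_0^T G^{i,N}_t(\mathbf{x})\,dW_t(x^i,r_i) - \frac{1}{2}\sum_{i=1}^N \int_0^T G^{i,N}_t(\mathbf{x})^2\,dt\Big\},
\]
where $W_t(x^i,r_i)$ is the $P$-Brownian motion driving $x^i$ and $G^{i,N}_t(\mathbf{x}) = \frac{1}{\lambda}\sum_j J_{ij} b(x^i_t,x^j_t)$ as defined in the excerpt.

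For the second stage, average over the disorder to get $\frac{dQ^N}{dP^{\otimes N}}(\mathbf{x},\mathbf{r}) = \mathbb{E}_J\big[\frac{dQ^N(J,\mathbf{r})}{dP^{\otimes N}}(\mathbf{x})\big]$. The key observation is that the rows $(J_{ij})_{j=1,\dots,N}$ corresponding to distinct $i$ are independent, so the processes $(G^{i,N}_\cdot(\mathbf{x}))_i$ are independent conditional on $\mathbf{x}$. The expectation therefore factorizes into $N$ single-particle expectations, one per index $i$.

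It remains to identify the law, under $J$, of each $G^{i,N}_\cdot(\mathbf{x})$: it is a Gaussian process with mean $\frac{\bar J}{N\lambda}\sum_j b(x^i_t,x^j_t) = m_{\hat\mu_N}(t,x^i)$ and covariance $\frac{\sigma^2}{N\lambda^2}\sum_j b(x^i_t,x^j_t)b(x^i_s,x^j_s) = K_{\hat\mu_N}(s,t,x^i)$. These match exactly the mean and covariance of $G^{\hat\mu_N}(x^i) + m_{\hat\mu_N}(\cdot,x^i)$ on the auxiliary space $(\hat\Omega,\hat\F,\gamma)$ of Remark~\ref{rem:structureG}. Replacing $\mathbb{E}_J$ by $\mathbb{E}_\gamma$ in each factor, taking the logarithm and dividing by $N$, one recovers $\bar\Gamma(\hat\mu_N)$ as claimed. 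The only substantive subtlety is bookkeeping: the Brownian motion $W(x^i,r_i)$ is a $P$-Brownian motion independent of $\gamma$, so $\mathbb{E}_\gamma$ can be swapped with the stochastic integral since $G^{\hat\mu_N}(x^i)$ and the driving Brownian motion live on independent probability spaces. No analytical obstacle is expected here beyond careful handling of the two sources of randomness.
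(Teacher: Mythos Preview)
Your proposal is correct and follows the standard approach; the paper itself does not give a proof of this lemma but defers it to the companion paper~\cite{cabana-touboul:15}, where exactly this argument (Girsanov for the quenched density, factorization over the independent rows of $J$, and identification of the law of $G^{i,N}(\mathbf{x})$ with that of $G^{\hat\mu_N}(x^i)+m_{\hat\mu_N}(\cdot,x^i)$ under $\gamma$) is carried out. One small remark: your final sentence about ``swapping $\mathbb{E}_\gamma$ with the stochastic integral'' is not quite the right description---no Fubini-type interchange is needed; for fixed paths $\mathbf{x}$ the quantity $\int_0^T G^{i,N}_t(\mathbf{x})\,dW_t(x^i,r_i)-\tfrac12\int_0^T G^{i,N}_t(\mathbf{x})^2\,dt$ is simply a measurable functional of the process $G^{i,N}(\mathbf{x})$, and equality of the Gaussian laws immediately gives equality of the expectations.
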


As in \cite{cabana-touboul:15}, this last lemma suggests that a version of Varadhan's lemma ans a LDP might hold. The following lemma properly defines the associated Varadhan's functional:

\begin{proposition}\label{prop:GammaBehaviour}
Let
\[
X^{\mu}(x,r):= \int_0^T \big(G^{\mu}_t(x)+m_{\mu}(t,x)\big) dW_t(x,r)-\frac{1}{2} \int_0^T \big(G^{\mu}_t(x)+m_{\mu}(t,x)\big)^2 dt.
\]
The map
\begin{align}\label{eq:Gamma}
\Gamma := \mu \in \M_1^+\big(\C \times D \big) \to
\left\{
\begin{array}{cl}
{\int_{\C \times D} \log \E_{\gamma} \Big[ \exp\big\{ X^{\mu}(x,r) \big\}  \Big] d\mu(x,r)} & \text{if } I(\mu|P) < \infty,\\
+ \infty & \text{otherwise }.
\end{array}
\right.
\end{align}
is well defined in $\R \cup \{ + \infty\}$, and satisfies
\begin{enumerate}
\item $\Gamma \leq I(\cdot|P)$,
\item If $\frac{2 \sigma^2 \bmax^2 T}{\lambda^2} < 1$, $\exists \iota \in ]0, 1[, e \geq 0$, $|\Gamma(\mu)| \leq \iota I(\mu|P) + e$.
\end{enumerate}
\end{proposition}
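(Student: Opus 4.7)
The plan is to control $\Gamma$ by combining the Donsker--Varadhan entropy inequality with explicit moment computations that exploit the independence of the Gaussian process $G^\mu$ (under $\gamma$) and the driving Brownian motion $W$ (under $P$). Well-definedness when $I(\mu|P) < \infty$ will be a by-product: using the representation of Remark~\ref{rem:structureG} of $G^\mu$ as a series in independent Gaussians with uniformly bounded $L^2_\mu$-coefficients, $X^\mu(x,r)$ is affine plus negative semi-definite quadratic in those Gaussians, so $\E_\gamma[\exp\{X^\mu(x,r)\}] \in (0,\infty)$ for every $(x,r)$, and $\mu$-integrability of its logarithm will follow from the two-sided bound of Part~(2). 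For Part~(1), I apply the entropy inequality
\[
\int f\,d\mu \leq I(\mu|P) + \log\int e^f\,dP
\]
with $f(x,r) := \log \E_\gamma[\exp\{X^\mu(x,r)\}]$. For each fixed realization of $G^\mu$ under $\gamma$, the integrand $t \mapsto G^\mu_t(x) + m_\mu(t,x)$ has a.s.\ finite $L^2([0,T])$-norm and the Novikov condition holds pathwise, so $\exp\{X^\mu\}$ is a Girsanov density: $\int \exp\{X^\mu\}\,dP = 1$ for $\gamma$-a.e.\ $\omega$. Tonelli then yields $\int e^f\,dP = \E_\gamma\bigl[\int \exp\{X^\mu\}\,dP\bigr] = 1$, hence $\Gamma(\mu) \leq I(\mu|P)$.

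For the upper half of Part~(2), I apply the entropy inequality with parameter $\alpha > 1$, obtaining
\[
\Gamma(\mu) \leq \frac{1}{\alpha} I(\mu|P) + \frac{1}{\alpha} \log \int \E_\gamma[\exp\{X^\mu\}]^\alpha\,dP.
\]
Jensen's inequality gives $\E_\gamma[\exp\{X^\mu\}]^\alpha \leq \E_\gamma[\exp\{\alpha X^\mu\}]$, and Girsanov applied pathwise (freezing $G^\mu$) yields
\[
\int \exp\{\alpha X^\mu\}\,dP = \exp\Bigl\{\tfrac{\alpha^2-\alpha}{2}\int_0^T \bigl(G^\mu_t(x) + m_\mu(t,x)\bigr)^2\,dt\Bigr\}.
\]
The bound $(G^\mu+m_\mu)^2 \leq 2(G^\mu)^2 + 2 m_\mu^2$ together with $|m_\mu| \leq \bar{J}\bmax/\lambda$ factors out a deterministic constant and reduces the problem to controlling the Gaussian moment $\E_\gamma\bigl[\exp\bigl\{(\alpha^2-\alpha)\int_0^T (G^\mu_t(x))^2\,dt\bigr\}\bigr]$. \textbf{This is the main obstacle.} By standard Gaussian analysis (diagonalizing the covariance operator on $L^2([0,T])$), this moment is finite precisely when $\alpha^2 - \alpha < (2\|K_{\mu,x}\|_{\mathrm{op}})^{-1}$, where $\|K_{\mu,x}\|_{\mathrm{op}}$ is the operator norm of the kernel $K_\mu(\cdot,\cdot,x)$. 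The trace bound $\|K_{\mu,x}\|_{\mathrm{op}} \leq \int_0^T K_\mu(t,t,x)\,dt \leq \sigma^2\bmax^2 T/\lambda^2$ shows that condition~\eqref{TimeCondition} is exactly what forces $(2\|K_{\mu,x}\|_{\mathrm{op}})^{-1} > 1$, so one can pick $\alpha = 1 + \delta$ small enough that the moment is finite and bounded uniformly in $\mu$. This yields $\Gamma(\mu) \leq \iota\, I(\mu|P) + e$ with $\iota = 1/\alpha < 1$.

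For the lower half, Jensen's inequality in the reverse direction, combined with the fact that $G^\mu$ is $\gamma$-centered with covariance $K_\mu$, gives
\[
\log \E_\gamma[\exp\{X^\mu(x,r)\}] \geq \E_\gamma[X^\mu(x,r)] = \int_0^T m_\mu(t,x)\,dW_t(x,r) - \tfrac{1}{2}\int_0^T\bigl(K_\mu(t,t,x) + m_\mu(t,x)^2\bigr)\,dt.
\]
The two deterministic terms are uniformly bounded in absolute value by $T\bmax^2(\sigma^2 + \bar{J}^2)/(2\lambda^2)$. To control the $\mu$-integral of the stochastic integral term, I apply the entropy inequality to $(x,r)\mapsto -\int_0^T m_\mu(t,x)\,dW_t(x,r)$ with parameter $\beta > 1$; the standard exponential-martingale bound with bounded integrand gives $\int \exp\{-\beta\int_0^T m_\mu\,dW\}\,dP \leq \exp\{\beta^2 T \bar{J}^2\bmax^2/(2\lambda^2)\}$, which yields $\int \int_0^T m_\mu\,dW\,d\mu \geq -\tfrac{1}{\beta} I(\mu|P) - \tfrac{\beta T \bar{J}^2\bmax^2}{2\lambda^2}$. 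Assembling the pieces produces $\Gamma(\mu) \geq -\iota\, I(\mu|P) - e$ with $\iota = 1/\beta < 1$. Taking the maximum of the two resulting $\iota$'s and summing the $e$'s across both halves finishes the proof; notably, only the positive direction invokes~\eqref{TimeCondition}.
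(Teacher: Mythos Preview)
Your argument for Part~(1) and the lower bound in Part~(2) are essentially correct and close in spirit to the paper's; the Jensen-based lower bound is in fact a little cleaner than the paper's route through the negative-part inequality.

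The upper bound in Part~(2), however, contains a genuine gap. The displayed identity
\[
\int \exp\{\alpha X^\mu\}\,dP \;=\; \exp\Bigl\{\tfrac{\alpha^2-\alpha}{2}\int_0^T \bigl(G^\mu_t(x)+m_\mu(t,x)\bigr)^2\,dt\Bigr\}
\]
cannot hold: the left-hand side has integrated out $x$, while the right-hand side still depends on $x$ through both $G^\mu_t(x)$ and $m_\mu(t,x)$. What you have implicitly assumed is that, once the $\gamma$-randomness is frozen, the integrand $h_t=G^\mu_t(x)+m_\mu(t,x)$ is deterministic under $P$, so that the quadratic correction factors out of the $P$-integral and the remaining exponential martingale integrates to~$1$. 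That is exactly what happens in the companion paper's setting $b(x,y)=S(y)$, but it fails here: the whole point of the present paper is that $G^\mu_t(x)$ depends on the path $x$ (through $b(x_t,\cdot)$), and this $x$-dependence is precisely the new difficulty flagged in the proof of Proposition~\ref{prop:GammaBehaviour}. A tell-tale sign that something is off: in your computation the coefficient $\alpha^2-\alpha$ in front of the quadratic term vanishes as $\alpha\to 1^+$, so your Gaussian-moment bound would impose \emph{no} restriction on $T$, contradicting the role of condition~\eqref{TimeCondition}.

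The repair is to replace the false identity by a H\"older split under the joint measure $P\otimes\gamma$: writing $\alpha X^\mu$ as a genuine exponential-martingale exponent plus a quadratic remainder and applying H\"older with conjugate pair $(\alpha,\tfrac{\alpha}{\alpha-1})$, the martingale factor integrates to~$1$ and one is left with
\[
\int_{\C\times D}\E_\gamma\Bigl[\exp\Bigl\{\tfrac{\alpha^2(\alpha+1)}{2}\int_0^T\bigl(G^\mu_t(x)+m_\mu(t,x)\bigr)^2\,dt\Bigr\}\Bigr]dP(x,r),
\]
whose coefficient is $\approx 1$ (not $0$) at $\alpha=1$. Bounding this uniformly in $x$ via Jensen in time and the one-dimensional Gaussian identity~\eqref{eq:GaussianExponentialQuadraticMoment} is then exactly where $\tfrac{2\sigma^2\bmax^2 T}{\lambda^2}<1$ is needed. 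Your covariance-operator diagonalization would also work at this stage, but only after the H\"older step produces the correct constant.
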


The proofs slightly differs from that of \cite[Proposition 9]{cabana-touboul:15} as the dependence of the Gaussian process $G^{\mu}$ in $x$ prevents from extracting it from integral over $P_r$. We thus reproduce the important lines of the proof, and rely on H\"older's inequality to cope with this new problem.

\begin{proof}
We suppose that $I(\mu|P)<+\infty$ and $\mu \ll P$ as the results is otherwise trivial. As $W(\cdot,r)$ is a $P_r$-Brownian motion, Girsanov's theorem ensures that the stochastic integral $\int_0^T \big(G^{\mu}_t(x)+m_{\mu}(t,x) \big)dW_t(x,r)$ is well defined $\gamma$-almost surely under $\mu$.\\
\noindent {\bf (1):}\\
Following the exact same proof as in \cite[Proposition 9]{cabana-touboul:15}, we obtain for any $\alpha \geq 1$:
\begin{multline}\label{ineq:GammaBehavior1}
\alpha \int_{\C \times D} \log\Big( \E_{\gamma} \big[ \exp\{X^{\mu}(x,r)\} \big] \vee M^{-1} \Big) d\mu(x,r) \leq 
\\ I(\mu|P) + \log\bigg\{ M^{-\alpha} + \E_{\gamma} \bigg[ \int_{D} \int_{\C} \exp\Big\{\alpha X^{\mu}(x,r) \Big\} dP_r(x) d\pi(r) \bigg] \bigg\},
\end{multline}
\begin{multline}\label{ineq:GammaBehavior2}
\alpha \int_{\C \times D} \Big( \log \E_{\gamma} \big[ \exp\big\{X^{\mu}(x,r)\big\} \big] \Big)^- d\mu(x,r) \leq \\
I(\mu|P) + \alpha C_T + \log\bigg\{ \E_{\gamma} \bigg[ \int_{D} \int_{\C} \exp\Big\{\alpha X^{\mu}(x,r) \Big\} dP_r(x) d\pi(r) \bigg] \bigg\},
\end{multline}
with the right-hand side of these two inequalities being possibly infinite. 	Moreover, $W(.,r)$ being a $P_r$-Brownian motion, the martingale property yields for $\alpha=1$
\begin{equation*}
\E_{\gamma} \bigg[ \int_{D} \int_{\C} \exp\Big\{\alpha X^{\mu}(x,r) \Big\} dP_r(x) d\pi(r) \bigg] = 1,
\end{equation*}
so that we obtain the first point by that sending $M \to + \infty$.

\noindent{\bf (2):}\\
Let $\alpha>1$, and $(\alpha,\frac{\alpha}{\alpha-1})$ be conjugate exponents. Then, making use of a martingale property:
\begin{align*}
\int_{\C \times D} \! \! \! \E_{\gamma} \Big[ & \exp\big\{\alpha X^{\mu}(x,r) \big\}  \Big] dP(x,r) \leq 
 \bigg\{ \int_{\C \times D} \! \! \! \! \E_{\gamma} \bigg[ \exp\Big\{\frac{\alpha^2(\alpha+1)}{2} \int_0^T \big(G^{\mu}_t(x) + m_{\mu}(t,x) \big)^2 dt \Big\}  \bigg] dP(x,r) \bigg\}^{\frac{\alpha-1}{\alpha}}.
\end{align*}
Under the short-time hypothesis $\frac{2 \sigma^2 \bmax^2 T}{\lambda^2} < 1$, we can proceed as in \cite[Proposition 9]{cabana-touboul:15} to prove finiteness of the right-hand side for $\alpha-1$ small enough, as we are able to rely on the following identity, valid for $\zeta\sim\mathcal{N}(\alpha,\beta)$ with $\beta<1$:
\begin{equation}\label{eq:GaussianExponentialQuadraticMoment}
\Exp \Big[ \exp\Big\{ \frac{1}{2} \zeta^2 \Big\}\Big]=\frac{1}{\sqrt{1-\beta}}\exp\Big\{\frac{\alpha^2}{2(1-\beta)} \Big\}=\exp\Big\{\frac{1}{2}\Big(\frac{\alpha^2}{1-\beta} - \log(1-\beta) \Big) \Big\}.
\end{equation}
Hence, by Jensen and Fubini's inequalities, we obtain that there exists a constant $C_T$ uniform in $x \in \C$ such that:
\[
\E_{\gamma} \bigg[ \exp\Big\{\frac{\alpha^2(\alpha+1)T}{2} \int_0^T \big(G^{\mu}_t(x) + m_{\mu}(t,x) \big)^2 \frac{dt}{T} \Big\}  \bigg] \leq  \exp\{C_T\},
\]
implying:
\begin{equation}\label{ineq:GammaBehavior3}
 \int_{\C \times D} \E_{\gamma} \Big[ \exp\big\{\alpha X^{\mu}(x,r) \big\}  \Big] dP(x,r)  \leq \exp\big\{(\alpha-1)C_T \big\}.
\end{equation}

Inequalities \eqref{ineq:GammaBehavior1}, \eqref{ineq:GammaBehavior2}, and \eqref{ineq:GammaBehavior3} ensure that, under the condition $\frac{2 \sigma^2 \bmax^2 T}{\lambda^2} < 1$ and for $\alpha>1$
\begin{equation*}
|\Gamma(\mu)| \leq \iota I(\mu|P)+ e,
\end{equation*}
with $\iota:=\frac{1}{\alpha}$, and $e:=(2\alpha-1) C_T$.
\end{proof}

Define
\begin{align*}
H (\mu)
:=
\left\{
\begin{array}{ll}
I(\mu|P) - \Gamma(\mu) & \text{if } I(\mu|P) < \infty,\\
\infty & \text{otherwise },
\end{array}
\right.
\end{align*}
for any $\nu \in \M_1^+\big(\C \times D \big)$:
\begin{align*}
\Gamma_{\nu} := \mu \in \M_1^+\big(\C \times D \big) \to
\left\{
\begin{array}{cl}
{\int_{\C \times D} \log \E_{\gamma} \Big[ \exp\big\{ X^{\nu}(x,r) \big\}  \Big] d\mu(x,r)} & \text{if } I(\mu|P) < \infty,\\
+ \infty & \text{otherwise },
\end{array}
\right.
\end{align*}
\begin{equation*}
H_{\nu} : \mu \to\\
\left\{
\begin{array}{ll}
I(\mu|P) - \Gamma_{\nu}(\mu)  & \text{if } I(\mu|P) < + \infty,\\
+ \infty & \text{otherwise, }
\end{array}
\right.
\end{equation*}
as well as the following probability measure on $\C \times D$
\begin{equation}\label{def:Qnu}
dQ_{\nu}(x,r):=\exp\big\{ \bar{\Gamma}_{\nu}(\delta_{(x,r)}) \big\} dP(x,r) := \E_{\gamma} \Big[ \exp\Big\{ X^{\nu}(x,r) \Big\}\Big]dP(x,r).
\end{equation}

We can show as in~\cite[Theorem 11]{cabana-touboul:15} this relatively intuitive result:

\begin{theorem}\label{thm:Qnu}
$Q_{\nu}$ is a well defined probability measure on $\M_1^+(\C \times D)$, and the two maps $H_{\nu}$ and $I(.|Q_{\nu})$ are equal on $\M_1^+(\C \times D)$. In particular $H_{\nu}$ is a good rate function reaching its unique minimum at $Q_{\nu}$.
\end{theorem}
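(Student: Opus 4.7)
The plan is to follow exactly the strategy of \cite[Theorem 11]{cabana-touboul:15}, adapted to the state-dependent setting, by first establishing that $Q_\nu$ is a probability measure equivalent to $P$, then identifying $H_\nu$ with the relative entropy $I(\cdot|Q_\nu)$, from which all three conclusions (good rate function, uniqueness of the minimum, and the location of the minimum at $Q_\nu$) follow from standard properties of Kullback--Leibler divergence on Polish spaces.

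First, I would show that $\int_{\C \times D} \mathbb{E}_{\gamma}[\exp\{X^{\nu}(x,r)\}] \, dP(x,r) = 1$. By independence of $\gamma$ and $P$ one applies Fubini and fixes a realization of the Gaussian process $G^{\nu}$. Conditionally on that realization, $t \mapsto \exp\{\int_0^t(G^{\nu}_s(x)+m_\nu(s,x))\,dW_s(x,r) - \tfrac12\int_0^t (G^{\nu}_s(x)+m_\nu(s,x))^2\,ds\}$ is a non-negative Dol\'eans--Dade exponential under $P_r$. Its $P_r$-expectation equals $1$ provided Novikov's condition holds, which under the short-time hypothesis $\tfrac{2\sigma^2 \bmax^2 T}{\lambda^2} < 1$ follows from the exponential-moment estimate \eqref{eq:GaussianExponentialQuadraticMoment} already used in the proof of Proposition~\ref{prop:GammaBehaviour} (alternatively, boundedness of $b$ and of $m_\nu$ together with the finite variance of $G^{\nu}$ given by $K_\nu$ gives directly the required quadratic exponential integrability). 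Swapping back the $\gamma$-expectation gives $Q_\nu(\C \times D)=1$.

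Next, since the density $\frac{dQ_\nu}{dP}(x,r) = \mathbb{E}_{\gamma}[\exp\{X^{\nu}(x,r)\}]$ is the $\gamma$-expectation of a strictly positive random variable, it is strictly positive wherever it is finite; and by Fubini it is $P$-integrable, hence finite $P$-almost surely. Therefore $Q_\nu \simeq P$, and for any $\mu \in \M_1^+(\C\times D)$ one has $\mu \ll P \iff \mu \ll Q_\nu$. When $I(\mu|P)=+\infty$, both $H_\nu(\mu)$ and $I(\mu|Q_\nu)$ equal $+\infty$. When $I(\mu|P)<\infty$, the chain rule
\[
\frac{d\mu}{dQ_\nu} = \frac{d\mu/dP}{\mathbb{E}_{\gamma}[\exp\{X^{\nu}\}]}
\]
combined with integrability of $\log \mathbb{E}_{\gamma}[\exp X^\nu]$ against $\mu$ (provided by Proposition~\ref{prop:GammaBehaviour} applied with fixed parameter $\nu$, which gives $|\Gamma_\nu(\mu)| \leq \iota I(\mu|P) + e$ under the short-time condition) allows one to split the logarithm under the integral, yielding
\[
I(\mu|Q_\nu) = I(\mu|P) - \int_{\C\times D} \log \mathbb{E}_{\gamma}[\exp\{X^{\nu}(x,r)\}]\,d\mu(x,r) = I(\mu|P) - \Gamma_\nu(\mu) = H_\nu(\mu).
\]

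Finally, since $I(\cdot|Q_\nu)$ is a standard good rate function on $\M_1^+(\C\times D)$ (lower semi-continuous with compact level sets for the weak topology on a Polish space, non-negative, vanishing only at $Q_\nu$), the identification above transfers all these properties to $H_\nu$, which completes the proof. The main obstacle is the probability-measure step, namely the Girsanov/Novikov verification that $\mathbb{E}_{P_r}[\exp\{X^{\nu}(\cdot,r)\}]=1$ for $\gamma$-a.e.~realization of $G^{\nu}$; this is where the short-time assumption~\eqref{TimeCondition} really enters, mirroring its role throughout Section~\ref{sec:LDP}, and where the state-dependent nature of $b(x,y)$ prevents the simpler martingale arguments available when $b(x,y)=S(y)$.
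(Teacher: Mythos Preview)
Your overall route---check $Q_\nu$ has mass one, establish $Q_\nu\simeq P$, then identify $H_\nu=I(\cdot|Q_\nu)$ via the chain rule for relative entropy---is exactly the argument the paper inherits from \cite[Theorem~11]{cabana-touboul:15}. The gap is in your justification of the martingale identity $\int_\C \exp\{X^\nu(\omega,x,r)\}\,dP_r(x)=1$ for $\gamma$-a.e.~$\omega$. Once a realization $\omega$ of the Gaussian field is frozen, $(t,x)\mapsto G^\nu_t(\omega,x)$ is a deterministic function of $x_t$ with no a~priori uniform bound, so Novikov under $P_r$ asks for finiteness of $\int_\C \exp\big\{\tfrac12\int_0^T(G^\nu_t(\omega,x)+m_\nu(t,x))^2\,dt\big\}\,dP_r(x)$, an integral in $x$ of a quantity that is \emph{not} Gaussian under $P_r$. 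The formula~\eqref{eq:GaussianExponentialQuadraticMoment} you invoke bounds a $\gamma$-expectation for fixed $x$ (where $G^\nu_t(x)$ \emph{is} Gaussian), not a $P_r$-integral for fixed $\omega$; these are different integrations, and neither that estimate nor ``boundedness of $b$ together with finite variance of $G^\nu$'' delivers the Novikov condition you claim.

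Correspondingly, your closing assertion that the short-time assumption~\eqref{TimeCondition} ``really enters'' at this step is off: Theorem~\ref{thm:Qnu} is stated in the paper without~\eqref{TimeCondition}, and in the proof of Proposition~\ref{prop:GammaBehaviour} point~(1) the martingale identity is invoked for $\alpha=1$ before any short-time hypothesis appears. Likewise in your step~3, finiteness of $\Gamma_\nu(\mu)$ when $I(\mu|P)<\infty$ already follows from the $\alpha=1$ instance of~\eqref{ineq:GammaBehavior1}--\eqref{ineq:GammaBehavior2} (the same inequalities hold with $X^\nu$ in place of $X^\mu$), so point~(2) of Proposition~\ref{prop:GammaBehaviour} and the short-time condition are not needed there either.
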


We introduce the Vaserstein distance on $\M_1^+(\C \times D)$, compatible with the weak topology:
\begin{equation*}
d_T^V(\mu,\nu):=\inf_{\xi}\bigg\{ \int_{(\C \times D)^{2}} \left [\sup_{0 \leq t \leq T} \lVert x-y\rVert_{\infty,T}^2+\Vert r-r' \Vert_{\R^d}^2\right] d\xi\big((x,r),(y,r')\big) \bigg\}^{\frac{1}{2}}
\end{equation*}
where the infimum is taken on the laws $\xi \in \M_1^+\big((\C \times D)^2 \big)$ with marginals $\mu$ and $\nu$. Moreover we will denote for any $t \in [0,T]$, and any $(x,r),(y,r') \in \C \times D$,
\[
d_t\big((x,r),(y,r')\big) := \Big(\Vert x-y \Vert_{\infty,t}^2+\Vert r-r' \Vert_{\R^d}^2\Big)^{\frac{1}{2}}
\]
where we recall that $\Vert x-y \Vert_{\infty,t}:=\sup_{0 \leq s \leq t} |x_s-y_s|^2$, and also 
\begin{equation*}
d_t^V(\mu,\nu):=\inf_{\xi}\bigg\{ \int_{(\C \times D)^{2}} d_t\big((x,r),(y,r')\big)^2 d\xi\big((x,r),(y,r')\big) \bigg\}^{\frac{1}{2}}.
\end{equation*}
The metric $d_T^V$ will control the regularity of the mean and variance structure of the Gaussian interactions and, in the long run (see Theorem~\ref{lemma1}), of the error between $H$ and its approximation $H_{\nu}$:

\begin{proposition}\label{prop:MeanAndVarRegularity}
Exists $C_T>0$ such that for any $\mu, \nu \in \M_1^+(\C \times D)$, $x \in C$, $t \in [0,T]$ and $u,s \in [0,t]$:
\begin{multline}\label{ineq:MajDiff}
   \big|m_{\mu}(t,x)-m_{\nu}(t,x)\big| +\big|K_{\mu}(t,s,x)-K_{\nu}(t,s,x)\big|
   + \big|\widetilde{K}^{t}_{\mu,x}(s,u)-\widetilde{K}^{t}_{\nu,x}(s,u)\big| \leq C_T d_T^V(\mu,\nu).
\end{multline}
\end{proposition}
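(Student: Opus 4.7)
The strategy is to bound the three differences separately, making use of the (standing) Lipschitz continuity of $b$ in its second variable, with constant $L$, together with $\Vert b \Vert_\infty = \bmax$. For a coupling $\xi \in \M_1^+\bigl((\C \times D)^2\bigr)$ of $\mu$ and $\nu$, I would write
$m_\mu(t,x) - m_\nu(t,x) = \frac{\bar J}{\lambda}\int \bigl(b(x_t,y_t) - b(x_t,y'_t)\bigr)\,d\xi$
and, after the telescope $b(x_t,y_t)b(x_s,y_s) - b(x_t,y'_t)b(x_s,y'_s) = (b(x_t,y_t) - b(x_t,y'_t))b(x_s,y_s) + b(x_t,y'_t)(b(x_s,y_s) - b(x_s,y'_s))$, a similar rewriting for $K_\mu - K_\nu$. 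An immediate Cauchy--Schwarz in $(y,r,y',r')$, together with the bound $\bmax$ on $b$, delivers bounds of the form $C_T\bigl(\int d_T^2\,d\xi\bigr)^{1/2}$; taking the infimum over $\xi$ produces the $d_T^V$ control for these two terms.

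The third term requires more work. The key preliminary is to realize $G^\mu$ and $G^\nu$ jointly on $(\hat\Omega,\hat\F,\gamma)$ in a way that reflects the coupling $\xi$. The plan is to take a centered Gaussian white noise $W^\xi$ indexed by $L^2_\xi\bigl((\C \times D)^2\bigr)$ and to set $G^\mu_s(x) := \frac{\sigma}{\lambda} W^\xi(\rho^1_{s,x})$, $G^\nu_s(x) := \frac{\sigma}{\lambda} W^\xi(\rho^2_{s,x})$ with $\rho^1_{s,x}(y,r,y',r') := b(x_s, y_s)$ and $\rho^2_{s,x}(y,r,y',r') := b(x_s, y'_s)$. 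The marginal laws coincide with the canonical construction of Remark~\ref{rem:structureG}, so $\widetilde K^t_{\mu,x}$ and $\widetilde K^t_{\nu,x}$ are unchanged; meanwhile, the Lipschitzness of $b$ yields the crucial estimate
\[
\E_\gamma\!\bigl[(G^\mu_s(x) - G^\nu_s(x))^2\bigr]^{1/2} \le \frac{\sigma L}{\lambda}\Bigl(\int d_T^2\,d\xi\Bigr)^{1/2}.
\]

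Armed with this joint realization, I would decompose
\[
\widetilde K^t_{\mu,x}(s,u) - \widetilde K^t_{\nu,x}(s,u) = \E_\gamma\!\bigl[(G^\mu_u G^\mu_s - G^\nu_u G^\nu_s)\Lambda_t(G^\mu)\bigr] + \E_\gamma\!\bigl[G^\nu_u G^\nu_s \bigl(\Lambda_t(G^\mu) - \Lambda_t(G^\nu)\bigr)\bigr].
\]
The first summand uses the uniform bound $\Lambda_t \le C_T$ from \eqref{ineq:BoundOnLambda} and a product telescope, reducing each resulting piece to Cauchy--Schwarz between a factor of type $G^\mu_r - G^\nu_r$ (bounded in $L^2$ by the display above) and one uniformly bounded in $L^2$ by $\sigma\bmax/\lambda$. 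The second summand is attacked through \eqref{ineq:DiffLambda}: its linear contribution is immediately $O(d_T^V)$, while its quadratic contribution reduces, after writing $|(G^\mu_r)^2 - (G^\nu_r)^2| = |G^\mu_r - G^\nu_r|\,|G^\mu_r + G^\nu_r|$, to estimating $\E_\gamma[|G^\nu_u G^\nu_s||G^\mu_r - G^\nu_r||G^\mu_r + G^\nu_r|]$ via generalized Hölder with four equal conjugate exponents $p=4$. Each factor is Gaussian, so its fourth moment is controlled either by the uniform covariance bound $\sigma\bmax/\lambda$ or, for the factor $G^\mu_r - G^\nu_r$, by a multiple of $\E_\gamma[(G^\mu_r - G^\nu_r)^2]^{1/2}$; integration over $r \in [0,t] \subset [0,T]$ and a final infimum over couplings $\xi$ then produce the claimed $d_T^V(\mu,\nu)$ bound. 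The main technical obstacle is precisely this last four-factor Hölder step combined with the joint white-noise construction: together they are what allow the coupling information to survive through products of correlated Gaussians weighted by the density $\Lambda_t$.
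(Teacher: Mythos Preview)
Your proposal is correct and follows the same route as the paper: the coupling argument for $m$ and $K$ is identical, and your white-noise joint realization of $(G^\mu,G^\nu)$ is exactly the paper's bidimensional Gaussian $(G,G')$ with covariance $K_\xi$ of \eqref{kxi}. The paper defers the details of the $\widetilde K$ estimate to \cite[Proposition~5]{cabana-touboul:15} rather than writing them out, whereas you spell out the two-term decomposition and the four-factor H\"older step explicitly; both are valid executions of the same idea, with the only cosmetic difference being that the companion paper tends to use Isserlis' theorem where you use H\"older and Gaussian moment formulas.
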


\begin{proof}
First, observe that for any $\xi \in \M_1^+\big((\C \times D)^2 \big)$ with marginals $\mu$ and $\nu$:
    \begin{align*}
	\big|m_{\mu}(t,x)-m_{\nu}(t,x)\big| & = \bigg|\frac{\bar{J}}{\lambda} \int_{\C \times D} b(x_t,y_t) d(\mu-\nu)(y,r')\bigg| \leq \frac{\bar{J}}{\lambda}  \int_{(\C \times D)^2} \Big|b(x_t,y_t) - b(x_t,z_t)\Big| d\xi\big((y,r'),(z,\tilde{r}')\big) \\
	& \overset{\mathrm{C.S}}{\leq} \frac{\bar{J} K_b}{\lambda} \bigg\{ \int_{(\C \times D)^2} \Vert y - z \Vert_{\infty,t}^2 d\xi\big((y,r'),(z,\tilde{r}')\big) \; \bigg\}^{\frac{1}{2}},
	\end{align*}
  so that $\big|m_{\mu}(t,x)-m_{\nu}(t,x)\big| \leq C_T d_T^V(\mu,\nu)$.

Fix now $\xi \in \M_1^+\big((\C \times D)^2 \big)$, with marginals $\mu$ and $\nu$. Letting $\big(G, G'\big)$ be a $\gamma$-bidimensional centered Gaussian processes with covariance:
\begin{equation}
K_{\xi}(s,t,x):= \frac{\sigma^2}{\lambda^2} \int_{(\C \times D)^2} 
\left(
\begin{array}{ccc}
b(x_s,y_s)b(x_t,y_t)  & b(x_s,y_s)b(x_t,z_t) \\
b(x_s,z_s)b(x_t,y_t) & b(x_s,z_s)b(x_t,z_t) \\
\end{array}
\right) 
d\xi \big(y,r'),(z,\tilde{r}')\big) \label{kxi},
\end{equation}
we obtain easily obtain (see \cite[Proof of Proposition 5]{cabana-touboul:15}) the inequalities
\begin{align*}
\big|K_{\mu}(t,s,x) - K_{\nu}(t,s,x) \big| \leq  C_T \bigg\{\E_{\gamma} \Big[ \big(G_t-G_t'\big)^2 \Big]^{\frac{1}{2}} +\E_{\gamma} \Big[ \big(G_s-G_s'\big)^2 \Big]^{\frac{1}{2}}\bigg\},
\end{align*}
\begin{align*}
\big|\widetilde{K}^t_{\mu,x}(s,u) - \widetilde{K}^t_{\nu,x}(s,u) \big| & \overset{\eqref{ineq:BoundOnLambda},\eqref{ineq:DiffLambda}}{\leq} C_T \bigg\{ \bigg(\int_0^t \E_{\gamma} \Big[ \big(G_v-G'_v\big)^2 \Big] dv \bigg)^{\frac{1}{2}} + \E_{\gamma} \Big[ \big(G_s-G_s'\big)^2 \Big]^{\frac{1}{2}} +\E_{\gamma} \Big[ \big(G_u-G_u'\big)^2 \Big]^{\frac{1}{2}} \bigg\}.
\end{align*}

Remarking that
\begin{align*}
\E_{\gamma} \Big[ \big(G_t-G_t'\big)^2 \Big] & = \frac{\sigma^2}{\lambda^2}  \int_{(\C \times D)^2 } \Big(b(x_t,y_t)-b(x_t,z_t)\Big)^2 \, d\xi\big((y,r'),(z,\tilde{r}')\big)\\
& \leq \frac{\sigma^2 K_b^2}{\lambda^2} \int_{(\C \times D)^2 }  d_T\big((y,r'),(z,\tilde{r}') \big)^2 d\xi\big((y,r'),(z,\tilde{r}')\big).
\end{align*}
and taking the infimum over $\xi$ brings the result.
\end{proof}

The following theorem control the error between $H$ and $H_{\nu}$ and ensures that the former is a good rate function under the time condition \eqref{TimeCondition}:

\begin{theorem}\label{lemma1} $\ $
\begin{enumerate}
\item  $\exists C_T>0$, such that for every $\mu,\nu \in \mathcal{M}_1^+\big( \C \times D \big)$,
\[
|\Gamma_{\nu}(\mu)-\Gamma(\mu)| \leq C_T \big(1+I(\mu|P)\big) d_T^V(\mu,\nu).
\]
\item If $\frac{2 \sigma^2 \bmax^2 T}{\lambda^2} < 1$, $H$ is a good rate function.
\end{enumerate}
\end{theorem}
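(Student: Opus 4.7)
Plan for Theorem~\ref{lemma1}. Both statements reduce to Proposition~\ref{prop:MeanAndVarRegularity} combined with Proposition~\ref{prop:GammaBehaviour}, so the plan proceeds in two stages.

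\textbf{Part (1), pointwise bound.} The first step is to write
\[
\Gamma_{\nu}(\mu) - \Gamma(\mu) = \int_{\C \times D} \log \frac{\E_{\gamma}[\exp X^{\nu}(x,r)]}{\E_{\gamma}[\exp X^{\mu}(x,r)]}\,d\mu(x,r)
\]
and to estimate the integrand pointwise in $(x,r)$. I would pull out the Girsanov exponential in $m_{\nu}$ from $\E_{\gamma}[\exp X^{\nu}]$ and apply the change of reference $d\gamma_{\tilde{K}^{T}_{\nu,x}} = \Lambda_{T}(G^{\nu}(x))\,d\gamma$, which, together with \eqref{eq:KtildeExpectation} and the quadratic Gaussian identity $\E[e^{L}] = e^{\frac{1}{2}\mathrm{Var}(L)}$ applied conditionally on the Brownian path under $\gamma_{\tilde{K}^{T}_{\nu,x}}$, yields the explicit formula
\[
\log \E_{\gamma}[\exp X^{\nu}(x,r)] = \int_{0}^{T}\! m_{\nu}\,dW_{t} - \tfrac{1}{2}\!\int_{0}^{T}\! m_{\nu}^{2}\,dt - \tfrac{1}{2}\!\int_{0}^{T}\! \tilde{K}^{t}_{\nu,x}(t,t)\,dt + \tfrac{1}{2}\!\int_{0}^{T}\!\!\int_{0}^{T}\! \tilde{K}^{T}_{\nu,x}(s,u)\,dV^{\nu}_{s}\,dV^{\nu}_{u}.
\]
Subtracting the analogous expression with $\mu$ in place of $\nu$, and invoking the bounds $|m_{\nu} - m_{\mu}|, |K_{\nu} - K_{\mu}|, |\tilde{K}^{t}_{\nu,x} - \tilde{K}^{t}_{\mu,x}| \leq C_{T}\, d_{T}^{V}(\mu,\nu)$ from Proposition~\ref{prop:MeanAndVarRegularity} together with the uniform estimates~\eqref{ineq:BoundOnLambda}, one decomposes the pointwise integrand into deterministic time-integrals of order $d_{T}^{V}$ plus stochastic integrals against $W$ whose coefficients are themselves of order $d_{T}^{V}$.

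\textbf{Part (1), integration against $\mu$.} The deterministic pieces integrate directly to at most $C_{T}\, d_{T}^{V}(\mu,\nu)$. The main obstacle is the martingale piece $M_{T}(x,r) := \int_{0}^{T}\! (m_{\nu} - m_{\mu})(t,x)\,dW_{t}(x,r)$, which is only controlled in $L^{2}(P)$ and not a priori in $L^{1}(\mu)$. I would invoke the entropy inequality
\[
\alpha \int_{\C \times D}\! M_{T}\,d\mu \leq I(\mu|P) + \log \int_{\C \times D}\! e^{\alpha M_{T}}\,dP \qquad (\alpha > 0),
\]
and bound the right-hand side using that under $P$, $M$ is a continuous martingale whose bracket satisfies $\langle M \rangle_{T} \leq T\, C_{T}^{2}\, d_{T}^{V}(\mu,\nu)^{2}$ by Proposition~\ref{prop:MeanAndVarRegularity}, so that the exponential moment is at most $\exp\{\tfrac{\alpha^{2}}{2} T C_{T}^{2} d_{T}^{V}(\mu,\nu)^{2}\}$. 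Optimising in $\alpha$ gives a bound of order $d_{T}^{V}(\mu,\nu)\sqrt{I(\mu|P)}$, which, via $\sqrt{I} \leq \tfrac{1}{2}(1 + I)$, is absorbed into $C_{T}\, d_{T}^{V}(\mu,\nu)(1 + I(\mu|P))$. The iterated stochastic integral with kernel $\tilde{K}^{T}$ is handled by the same entropy-plus-bracket trick, after estimating its quadratic variation by $\sup_{s,u}|\tilde{K}^{T}_{\nu,x}(s,u) - \tilde{K}^{T}_{\mu,x}(s,u)|$. Running the inequality with the opposite sign yields the two-sided bound of Part~(1).

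\textbf{Part (2).} By Proposition~\ref{prop:GammaBehaviour}(2), $|\Gamma(\mu)| \leq \iota\, I(\mu|P) + e$ with $\iota \in (0,1)$, so $H(\mu) \geq (1-\iota) I(\mu|P) - e$ and therefore $\{H \leq M\} \subset \{I(\cdot|P) \leq (M+e)/(1-\iota)\}$ is compact, since relative entropy with respect to a Borel probability measure on a Polish space is a good rate function. Lower semicontinuity then follows from Part~(1): if $\mu_{n} \to \mu$ weakly with $\sup_{n} I(\mu_{n}|P) < \infty$, then on this entropy level set weak convergence and $d_{T}^{V}$-convergence coincide (uniform integrability is provided by the entropy bound and the integrability of $P$), so $\Gamma(\mu_{n}) \to \Gamma(\mu)$ by Part~(1) and lower semicontinuity of $I(\cdot|P)$ gives $\liminf_{n} H(\mu_{n}) \geq H(\mu)$; if instead $I(\mu_{n}|P) \to +\infty$ along a subsequence, the coercive lower bound $H \geq (1-\iota) I(\cdot|P) - e$ makes lower semicontinuity trivial.
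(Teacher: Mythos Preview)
Your decomposition of $\log \E_{\gamma}[\exp X^{\nu}(x,r)]$ into a deterministic part, a simple martingale $\int_0^T m_{\nu}\,dW_t$, and a quadratic term $\tfrac12\iint \tilde K^{T}_{\nu,x}(s,u)\,dV^{\nu}_s\,dV^{\nu}_u$ is exactly the $\Gamma_{1,\nu}+\Gamma_{2,\nu}$ splitting the paper uses, and your treatment of the deterministic pieces and of the linear martingale $\int (m_{\nu}-m_{\mu})\,dW$ via the entropy inequality with a deterministically bounded bracket is fine and close to the paper's argument for that term.

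The genuine gap is in your claim that ``the iterated stochastic integral with kernel $\tilde K^{T}$ is handled by the same entropy-plus-bracket trick.'' Two obstructions make this fail as stated. First, the kernel $\tilde K^{T}_{\nu,x}(s,u)=\E_{\gamma}\big[G^{\nu}_s(x)G^{\nu}_u(x)\Lambda_T(G^{\nu}(x))\big]$ depends on the whole trajectory $x\vert_{[0,T]}$ through $\Lambda_T$, so the iterated integral $\iint \tilde K^{T}_{\nu,x}\,dW_s\,dW_u$ is \emph{anticipative}: it is not an It\^o martingale under $P_r$, and no bracket is available. Second, even if one had an adapted double Wiener integral $2\int_0^T\big(\int_0^t k(s,t)\,dW_s\big)\,dW_t$ with $|k|\le C\,d_T^V(\mu,\nu)$, its quadratic variation is $4\int_0^T\big(\int_0^t k(s,t)\,dW_s\big)^2\,dt$, which is \emph{random}, so the bound $\E_P[e^{\alpha M}]\le e^{\alpha^2\langle M\rangle_T/2}$ is unavailable and optimisation in $\alpha$ does not yield the linear factor $(1+I(\mu|P))$.

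The paper circumvents both issues by \emph{not} passing to $\tilde K^{T}$ first. It keeps the $\gamma$-expectation outside and writes the troublesome pieces as $\int_{\C\times D}\E_{\gamma}\big[(\int_0^T H_t\,(\alpha\,dW_t-M_t\,dt))^2\big]\,d\mu$ with $H_t$ adapted (built from $G^{\mu}_t(x),G^{\nu}_t(x)$, which depend only on $x_t$). The core estimate is then a separate technical lemma (Lemma~\ref{lemma3} in the paper): apply It\^o to $N_T^2$, use the stochastic Fubini theorem to commute $\E_{\gamma}$ with $\int\cdot\,dW$, obtain an honest $P_r$-local martingale $\tilde N_T=\int_0^T \E_{\gamma}[H_tN_t]\,dW_t$, and control $\int \tilde N_T\,d\mu$ by the entropy inequality combined with Dambis--Dubins--Schwarz to evaluate $\int_P \exp\{\tilde N_T^2/(4\langle\tilde N\rangle_T)\}\,dP$. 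This is where the factor $1+I(\mu|P)$ actually comes from, and it is substantially more delicate than the linear-martingale case you treat. Your plan needs this ingredient (or an equivalent second-chaos estimate) to close.

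For Part~(2), your compactness argument via $H\ge(1-\iota)I(\cdot|P)-e$ is correct. For lower semicontinuity note that Part~(1) controls $\Gamma_{\nu}(\mu)-\Gamma(\mu)$, not $\Gamma(\mu_n)-\Gamma(\mu)$; you need an extra step showing that $\mu_n\mapsto\Gamma_{\mu}(\mu_n)$ is continuous on entropy level sets (this is straightforward since the integrand is fixed), after which your decomposition $\Gamma(\mu_n)-\Gamma(\mu)=\big(\Gamma_{\mu_n}(\mu_n)-\Gamma_{\mu}(\mu_n)\big)+\big(\Gamma_{\mu}(\mu_n)-\Gamma_{\mu}(\mu)\big)$ goes through.
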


\begin{proof}
The basic mechanism for the proof is similar than \cite[Lemma 4]{cabana-touboul:15} or \cite[Lemma 3.3-3.4]{benarous-guionnet:95}. However, the dependence in $x$ of the Gaussian $G^{\mu}(x)$ is problematic, as we cannot take it out of integrals on $x$. To cope with this difficulty, we will rely on tools from probability theory, such as Fubini's theorem for stochastic integrals, or Dambis-Dubins-Schwarz theorem (D.D.S.). We focus our attention on point 1., whereas point 2. previously shown without restriction on time in cases where $b(x,y)=S(y)$ \cite{cabana-touboul:12,cabana-touboul:15}, is now only valid under the short-time hypothesis of Proposition~\ref{prop:GammaBehaviour} point 2.

As proven in \cite{cabana-touboul:15}, $\Gamma_{\nu}$ writes $\Gamma_{\nu}(\mu)= \Gamma_{1,\nu}(\mu) + \Gamma_{2,\nu}(\mu)$ with
\begin{align*}
\Gamma_{1,\nu}(\mu) := - \frac{1}{2} \int_0^T \Big( \widetilde{K}^t_{\nu,x}(t,t)+m_{\nu}(t,x)^2 \Big) dt  d\mu(x,r),
\end{align*}
and
\begin{equation}\label{eq:Gamma2nu}
\Gamma_{2,\nu}(\mu) :=
\left\{
\begin{array}{cl}
{\frac{1}{2} \int_{{\C\times D}} \int_{\hat{\Omega}} L^{\nu}_T(x,r)^2  d\gamma_{\widetilde{K}_{\nu,x}^{T}} d\mu(x,r) + \int_{{\C\times D}} \int_0^T m_{\nu}(t,x)dW_t(x,r) d\mu(x,r)} & \text{if } I(\mu|P) < \infty,\\
+ \infty & \text{otherwise }.
\end{array}
\right.
\end{equation}

The previous decomposition has the interest of splitting the difficulties: $|\Gamma_{\nu}(\mu)-\Gamma(\mu)| \leq |\Gamma_{1,\nu}(\mu)-\Gamma_1(\mu)| + |\Gamma_{2,\nu}(\mu)-\Gamma_2(\mu)|$. 
The first term is easily controlled by $C_T d_T^V(\mu,\nu)$ using Proposition~\ref{prop:MeanAndVarRegularity}. Let us prove that 
\[
|\Gamma_{2,\nu}(\mu)-\Gamma_2(\mu)| \leq C_T (1+I(\mu|P))d_T^V(\mu,\nu).
\]
The inequality is trivial when $I(\mu \vert P)=\infty$. We now assume that $I(\mu \vert P)<\infty$ implying $\mu \ll P$ and finiteness of $\Gamma(\mu)$ and $\Gamma_{\nu}(\mu)$. In particular, $\mu$ has a Borel-measurable density $\rho_{\mu}$ with respect to $P$:
\[
d\mu(x,r)=\rho_{\mu}(x,r)dP(x,r).
\]

Let $\varepsilon>0$, and let $\xi \in \M_1^+\big( (\C \times D)^2 \big)$ with marginals $\mu$ and $\nu$ be such that 
\[
\bigg\{ \int_{(\C \times D)^2} d_T\big( (y,r'), (z,\tilde{r})\big)^2 d\xi\big( (y,r'),(z,\tilde{r})\big)  \bigg\}^{\frac{1}{2}} \leq d_T^V(\mu,\nu) +\varepsilon.
\]
Let also $\big(G(x), G'(x)\big)_{x \in \C}$ a family of bi-dimensional centered Gaussian process from the probability space $\big(\hat{\Omega},\hat{\mathcal{F}},\gamma\big)$ with covariance $K_{\xi}$ defined by \eqref{kxi}.
In the expression of $\Gamma_{2,\nu}(\mu)$ and $\Gamma_2(\mu)$ we can then replace the triplet $(G^{\mu}, G^{\nu}, \gamma)$ by $(G, G',\gamma)$, so that we choose their covariance to be given by $K_{\xi}$ (see \cite[Remark 3]{cabana-touboul:15}). As proved in Proposition~\ref{prop:MeanAndVarRegularity}, we can show that exist a constant $C_T>0$ such that for any $t \in [0,T]$, $x \in \C$,
\[
\E_{\gamma} \Big[ \big(G_t(x)-G'_t(x)\big)^2\Big] \leq \big(d_T^V(\mu,\nu) +\varepsilon\big)^2.
\]
Let also for any $t \in [0,T]$
\[
L_t(x,r):= \int_{0}^{t} G_s(x) dV^{\mu}_s(x,r), \quad L'_t(x,r):=\int_{0}^{t} G'_s(x) dV^{\nu}_s(x,r)
\]
Then,
\begin{align*}
& |\Gamma_{2,\nu}(\mu) - \Gamma_2(\mu)| \leq \frac{1}{2} \bigg|\int_{\C\times D}\E_{\gamma} \Big[ L'_T(x,r)^2 \big(\Lambda_T(G'(x))-\Lambda_T(G(x))\big)\Big] d\mu(x,r) \bigg| \\ 
& +\frac{1}{2} \bigg|\int_{\C\times D} \E_{\gamma} \Big[ \big( L_T(x,r)^2 - L'_T(x,r)^2 \big) \Lambda_T(G(x))\Big] d\mu(x,r) \bigg| + \bigg|\int_{\C\times D} \int_{0}^{T} (m_{\nu}-m_{\mu})(t,x)dW_t(x,r) d\mu(x,r)\bigg| 
 \end{align*}
Observe that by inequality \eqref{ineq:DiffLambda} we have
\begin{align*}
&\bigg|\int_{\C\times D}\E_{\gamma} \Big[ L'_T(x,r)^2 \big(\Lambda_T(G'(x))-\Lambda_T(G(x))\big)\Big] d\mu(x,r) \bigg| \leq C_T \Bigg\{ \big(d_T^V(\mu,\nu)+\varepsilon\big) \int_{\C\times D} \E_{\gamma} \Big[ L'_T(x,r)^2 \Big]  d\mu(x,r) \\
& +  \int_{\C\times D} \int_0^T \E_{\gamma} \bigg[\big| G_t(x)^2-{G'_t}(x)^2\big| L'_T(x,r)^2 \bigg] dt d\mu(x,r) \Bigg\} \overset{\mathrm{C.S}}{\leq} C_T \big(d_T^V(\mu,\nu)+\varepsilon\big) \int_{\C\times D} \E_{\gamma} \Big[ L'_T(x,r)^2 \Big]  d\mu(x,r),
\end{align*}
as Isserlis' theorem ensures that,
\begin{align*}
&\E_{\gamma}\Big[ \big(G'_t(x)-G_t(x)\big)^2 L'_T(x,r)^2 \Big] = \E_{\gamma}\Big[ \big(G'_t(x)-G_t(x)\big)^2 \Big] \E_{\gamma}\Big[ L'_T(x,r)^2 \Big] + 2 \E_{\gamma}\Big[ \big(G'_t(x)-G_t(x)\big)L'_T(x,r) \Big]^2 \\
& \overset{\mathrm{C.S}}{\leq} 3 \E_{\gamma}\Big[ \big(G'_t(x)-G_t(x)\big)^2 \Big] \E_{\gamma}\Big[ L'_T(x,r)^2 \Big] \leq 3 \big(d_T(\mu,\nu)+\varepsilon\big)^2 \E_{\gamma}\Big[ L'_T(x,r)^2 \Big],
\end{align*}
and similarly
\[
\E_{\gamma}\bigg[ \big(G'_t(x)+G_t(x)\big)^2 L'_T(x,r)^2 \bigg] \leq C_T \E_{\gamma}\Big[ L'_T(x,r)^2 \Big].
\]
As a consequence,
\begin{align}
&|\Gamma_{2,\nu}(\mu) - \Gamma_2(\mu)| \overset{\mathrm{C.S}}{\leq} C_T \Bigg\{ \overbrace{ \! \prod_{\varepsilon=\pm1} \Bigg( \int_{\C\times D} \!\E_{\gamma} \bigg[\Big( \int_0^T \big(G_t(x) +\varepsilon G'_t(x)\big) dV^{\nu}_t(x,r) \Big)^2 \bigg] d\mu(x,r) \Bigg)^\frac{1}{2}}^{B_1}   \nonumber  \\ 
& + \big(d_T^V(\mu,\nu)+\varepsilon\big) \underbrace{\int_{\C\times D} \E_{\gamma} \Big[ L'_T(x,r)^2 \Big]  d\mu(x,r)}_{B_2} + \underbrace{ \bigg( \int_{\C\times D}\Big| \int_0^T (m_{\nu}-m_{\mu})(t,x)dW_t(x,r) \Big|^2 d\mu(x,r) \bigg)^{\frac{1}{2}}}_{B_3} \nonumber\\
&+ \underbrace{ \prod_{\varepsilon=\pm1} \Bigg( \int_{\C\times D} \E_{\gamma} \bigg[\bigg\{ \int_0^T G_t(x) \Big( (1+\varepsilon)dW_t(x,r) - \big(m_{\mu}(t,x)+\varepsilon m_{\nu}(t,x)\big)dt \Big) \bigg\}^2 \bigg] d\mu(x,r) \Bigg)^{\frac{1}{2}}}_{B_4} \Bigg\}  \label{ineqgamma2}.
\end{align}

Remark that these four terms can be cast in the form
\[
\int_{\C\times D} \E_{\gamma} \bigg[ \Big(\int_0^T H_t(G,G',\mu,\nu)(x)\big(\alpha dW_t(x,r) - M_t(\mu,\nu)(x)dt \big) \Big)^2 \bigg] d\mu(x,r)
\]
with $\alpha$ equals $0$ or $1$. Controlling such terms is the aim of the following technical lemma.

\begin{lemma}\label{lemma3}
Let $\mu \in \M_1^+(\C \times D)$, with $\mu \ll P$ and let the filtration $\big(\mathcal{F}^{\mathbf{x}}_t\big)_{t \in [0,T]}$ on $\C$, where $\mathcal{F}^{\mathbf{x}}_t:= \sigma \big( x_s, 0 \leq s \leq t \big)$ is the $\sigma$-algebra on $\C$ generated by the coordinate process up to time $t$. Let also 
\begin{itemize}
\item $x \in \C \to \big(M_t (x) \big)_{t \in [0,T]}$ a bounded time-continuous process progressively measurable for the filtration  $(\mathcal{F}^{\mathbf{x}}_t)_{t \in [0,T]}$ and continuous in $x$,
\item $(x,\omega) \in \C \times \hat{\Omega} \to \Big(H_t(x,\omega) \Big)_{t \in [0,T]}$ a progressively measurable process for the filtration $\big(\mathcal{F}^{\mathbf{x}}_t \otimes \hat{\mathcal{F}} \big)_{t \in [0,T]}$, such that $\Big(H_t(x,\cdot), t \in [0,T] \Big)_{x \in \C}$ is a continuous family of $\gamma$-Gaussian processes (possibly deterministic) with uniformly bounded covariance,
\end{itemize}
and define
\[
A(\mu):= \int_{\C\times D} \int_{\hat{\Omega}} \Big(\int_0^T H_t(x,\omega)\big(\alpha dW_t(x,r) - M_t(x)dt \big) \Big)^2 d\gamma(\omega) d\mu(x,r)
\]
with $\alpha \in \{0,1\}$. 
Then, there exists a constant $C_T>0$ independent of $\mu$ such that
\begin{equation}
A(\mu) \leq C_T \bigg\{ \alpha \big( I(\mu|P) + 1 \big) + \underset{x \in \C, t \in [0,T]}{\sup} M_t^2(x) \bigg\} \underset{x \in \C, t \in [0,T]}{\sup} \E_{\gamma} \big[H_t^2(x)\big],
\end{equation}
with the right-hand side being possibly infinite.

\end{lemma}

\begin{proof}
As $(a+b)^2 \leq 2 a^2+ 2 b^2, \; \forall a,b \in \R$,
\begin{align*}
A(\mu,\nu) & \leq 2  \int_{\C\times D} \int_{\hat{\Omega}} \bigg\{ \alpha\Big( \underbrace{\int_0^T H_t(x,\omega) dW_t(x,r)}_{=:N_T(x,\omega,r)} \Big)^2 +\Big(\int_0^T H_t(x,\omega) M_t(x) dt \Big)^2 \bigg\} d\gamma(\omega) d\mu(x,r) \\
& \overset{\text{Fubini, C.S.}}{\leq} 2 \alpha \int_{\hat{\Omega}} \int_{\C\times D} N_T^2(x,\omega,r) d\mu(x,r) d\gamma(\omega) + 2 \int_{\C\times D} \int_0^T M_t^2(x) \E_{\gamma} \big[ H_t^2(x) \big] dt d\mu(x,r).
\end{align*}

Define the Radon-Nikodym density $\rho_{\mu}(x,r):=\der{\mu}{P}(x,r)$ and remark that for every $r \in D$, $\big(N_t (,\cdot,\cdot,r) \big)$ is, $\gamma$-a.s., a well-defined $P_r$-martingale. It\^o calculus gives, $\gamma$-a.s., the indistinguishable equality
\begin{equation}\label{eq:TechnicalLemmaIto}
N_T^2(x, \omega ,r) = 2 \int_0^T H_t(x,\omega) N_t(x,\omega,r)  dW_t(x,r) + \int_0^T H_t^2(x,\omega) dt,
\end{equation}
under $P_r$ so that, $\gamma$-a.s.,
\begin{align*}
\int_{\C\times D} N_T^2(x,\omega,r) & \rho_{\mu}(x,r)dP(x,r) = 2 \int_{\C\times D} \int_0^T H_t(x,\omega) N_t(x,\omega,r)  dW_t(x,r) \rho_{\mu}(x,r)dP(x,r)\\
&+ \int_{\C\times D} \int_0^T H_t^2(x,\omega) dt \rho_{\mu}(x,r)dP(x,r).
\end{align*}
Relying again on Fubini's Theorem,
\begin{align}
A(\mu,\nu) & \leq 4 \alpha \int_{\C\times D} \E_{\gamma} \bigg[ \int_0^T H_t(x) N_t(x,r) dW_t(x,r) \bigg]  \rho_{\mu}(x,r)dP(x,r) \nonumber\\
&+ 2 \int_{\C\times D} \int_0^T \E_{\gamma} \Big[ \alpha H_t^2(x) \Big] dt d\mu(x,r) + 2T \int_{\C\times D} \int_0^T M_t^2(x) \E_{\gamma} \big[H_t^2(x) \big] dt d\mu(x,r). \label{ineq:lemmaControlofTrickyTerms}
\end{align}
Under the favorable assumptions of the lemma, the last two terms of the right-hand side of \eqref{ineq:lemmaControlofTrickyTerms} are easily controlled taking the supremum of their integrand on $\C \times [0,T]$. In order to control the first term, we rely on the stochastic Fubini's theorem \cite[Theorem IV.65]{protter:05}, to show that the equality
\[
\tilde{N}_T(x,r):= \int_0^T \E_{\gamma} \bigg[  H_t(x) N_t(x,r) \bigg] dW_t(x,r) =\E_{\gamma} \bigg[ \int_0^T H_t(x) N_t(x,r) dW_t(x,r) \bigg],
\]
is well-defined, and holds $P$-almost surely. To do so, we need to ensure that:
\begin{enumerate}
\item $\forall r \in D$, $(x,\omega) \to \Big(\tilde{H}_t(x,\omega,r):= H_t(x,\omega)N_t(x,\omega,r )\Big)_{t \in [0,T]}$ is $\hat{\mathcal{F}} \otimes  \mathcal{P}$ measurable, where $\mathcal{P}$ is the $\sigma$-algebra generated by continuous $\big(\mathcal{F}^{\mathbf{x}}_t \big)_{t \in [0,T]}$-adapted processes,
\item the following integrability condition holds $\forall r \in D$:
\[
\int_{\C} \int_0^T \int_{\hat{\Omega}} \tilde{H}_t(x,\omega,r)^2 d\gamma(\omega) dt dP_r(x) < \infty.
\]
\end{enumerate}

The first hypothesis is a direct consequence of the regularity and measurability hypotheses of the lemma. We now demonstrate that the second hypothesis is valid. Indeed, for any $t \in [0,T]$,
\begin{align*}
& \int_{\C} \int_{\hat{\Omega}}  \tilde{H}_t(x,\omega,r)^2  d\gamma(\omega) dP_r(x) = \int_{\C} \E_{\gamma} \bigg[H_t(x,r)^2 N_t(x,r)^2 \bigg]  dP_r(x)\\
& \overset{C.S., \text{Fub.}}{\leq} \bigg\{\int_{\C} \E_{\gamma} \big[ H_t^4(x) \big] dP_r(x) \bigg\}^{\frac{1}{2}} \E_{\gamma} \bigg[ \int_{\C} N_t^4(x,r)  dP_r(x) \bigg]^{\frac{1}{2}} \overset{B.D.G.}{\leq} C_T \E_{\gamma} \bigg[ \int_{\C}  \langle N \rangle_t^2(x)  dP_r(x) \bigg]^{\frac{1}{2}} \\
&\overset{C.S., \text{Fubini}}{\leq} C_T \bigg\{ \int_{\C} \int_0^t \E_{\gamma} \Big[  H_s^4(x) \Big] ds dP_r(x) \bigg\}^{\frac{1}{2}}  < + \infty.
\end{align*}
Hence, the theorem applies so that 
\[
\int_{\C\times D} \E_{\gamma} \bigg[ \int_0^T H_t(x) N_t(x,r) dW_t(x,r) \bigg]  d\rho_{\mu}(x,r)dP(x,r) = \int_{\C\times D} \tilde{N}_T(x,r) d\mu(x,r).
\]
Observe that inequality (10) of \cite[p.12]{cabana-touboul:15} brings
\[
\int_{\C\times D} \! \! \! \! \! \! \tilde{N}_T(x,r) d\mu(x,r) \overset{\mathrm{C.S.}}{\leq} 2 \Big( \int_{\C\times D} \! \! \! \! \! \! \langle \tilde{N} \rangle_T(x,r) d\mu(x,r) \Big)^{\frac{1}{2}} \bigg(I(\mu|P) + \log\bigg\{\int_{\C\times D} \! \! \! \! \! \! \exp\bigg\{ \frac{\tilde{N}_T^2(x,r)}{4\langle \tilde{N} \rangle_T (x,r)} \bigg\} dP(x,r) \bigg\}  \bigg)^{\frac{1}{2}}.
\]
As $\tilde{N}(\cdot,r)$ is a $P_r$-local martingale for every $r \in D$, Dambis-Dubins-Schwarz (D.D.S.) theorem ensures that $\frac{\tilde{N}_T(\cdot,r)^2}{4\langle \tilde{N}(\cdot,r) \rangle_T } $ has the same law as $\frac{B_{\langle \tilde{N} \rangle_T}^2}{4 \langle \tilde{N} \rangle_T}$, where $B$ is some $P_r$-Brownian motion, so that exists a constant $C>0$ satisfying
\[
\log\bigg\{\int_{\C\times D} \exp\bigg\{ \frac{\tilde{N}_T^2(x,r)}{4\langle \tilde{N} \rangle_T(x,r) } \bigg\} dP(x,r) \bigg\} \leq C.
\]
We can therefore conclude that there exists two constants: $\tilde{C}>0$ independent of time, and $C_{T}>0$ increasing with $T$ such that:
\begin{align*}
& \int_{\C\times D} \tilde{N}_t(x,r) d\mu(x,r)  \leq \tilde{C} \bigg( \int_{\C\times D} \int_0^T \E_{\gamma} \bigg[ H_t(x) N_t(x,r) \bigg]^2 dt d\mu(x,r) \bigg)^{\frac{1}{2}} \big(I(\mu|P) + 1 \big)^{\frac{1}{2}}  \\
& \overset{\text{C.S., Fubini}}{\leq} 2\tilde{C} \; \underset{(x,t) \in \C \times [0,T]}{\sup} \bigg\{ \E_{\gamma} \Big[ \langle N \rangle_t(x) H_t^2(x) \Big] \bigg\}^{\frac{1}{2}}  \bigg( \int_0^T \E_{\gamma} \bigg[ \int_{\C\times D}  \frac{N_t^2(x,r)}{4 \langle N \rangle_t(x)} d\mu(x,r) \bigg] dt  \bigg)^{\frac{1}{2}} \big(I(\mu|P) + 1 \big)^{\frac{1}{2}} \\
& \overset{\text{D.D.S}}{\leq} C_T \underset{\C \times [0,T]}{\sup} \bigg\{ \E_{\gamma} \Big[ H_s^2(x) H_t^2(x) \Big] \bigg\}^{\frac{1}{2}}  \big(I(\mu|P) + 1 \big).
\end{align*}
where we have used inequality (10) of \cite[p.12]{cabana-touboul:15}. Isserlis' theorem then brings the result.
\end{proof}

It is easy to check that $B_1, \ldots, B_4$ are of the form of the terms handled in lemma~\eqref{lemma3}, satisfying in particular the adaptability conditions (we recall that the law of $G^{\nu}_t(x)$ depends on the trajectory of $x$ up to time $t$). To conclude, we underline that the quantities $\underset{x \in \C, t \in [0,T]}{\sup} \E_{\gamma} \Big[\big(G_t(x)-G'_t(x)\big)^2\Big]$, and $\underset{x \in \C, t \in [0,T]}{\sup} \big(m_{\mu}(t,x)-m_{\nu}(t,x)\big)^2$, are bounded by $\big(d_T^V(\mu,\nu)+\varepsilon\big)^2$ (see equation ~\eqref{ineq:MajDiff} for the term involving means).
\end{proof}

\subsection{Upper-bound and Tightness}

We are now in a position to demonstrate a partial LDP relying on the exponential tightness of the family $\Big(Q^N\big(\hat{\mu}_N \in \cdot \big) \Big)_N$, and an upper-bound inequality for closed subsets. To prove the first point, we rely on the exponential tightness of $P^{\otimes N}$ and the short time hypothesis \eqref{TimeCondition} and follow the approach proposed by Ben Arous and Guionnet in~\cite{ben-arous-guionnet:95,guionnet:97}. The second point is a consequence of an upper-bound for compact sets obtained similarly as in \cite[Theorem 7]{cabana-touboul:15}, and both the exponential tightness of $\Big(Q^N\big(\hat{\mu}_N \in \cdot \big) \Big)_N$ and the goodness of $H$ extending this bound to every closed sets.

\begin{theorem}\label{thm:WLDP}
	Under the condition $\frac{2 \sigma^2 \bmax^2 T}{\lambda^2} < 1$, we have:
\begin{enumerate}
\item For any real number $M \in \R$, there exists a compact set $K_M$ of $\M_1^+(\C \times D )$ such that, for any integer $N$,
\begin{equation*}
\frac{1}{N} \log Q^N(\hat{\mu}_N \notin K_M) \leq -M.
\end{equation*}
\item For any closed subset $F$ of $\M_1^+(\C \times D)$,
\begin{equation*}
\limsup_{N \rightarrow \infty} \frac{1}{N} \log{ Q^N(\hat{\mu}_N \in F)} \leq - \inf_{F} H .
\end{equation*}
\end{enumerate}
\end{theorem}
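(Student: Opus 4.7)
The plan is the classical two-step strategy: establish exponential tightness of $Q^N(\hat\mu_N \in \cdot)$, then an upper bound on compact sets by localization, and extend to closed sets via tightness. Both parts rest on the Radon-Nikodym formula of Lemma~\ref{lemma2} and the estimates of Proposition~\ref{prop:GammaBehaviour}.

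For point 1, I would start from Sanov's theorem, which gives compacts $K'_{M'}$ with $P^{\otimes N}(\hat\mu_N \notin K'_{M'}) \le e^{-NM'}$, and transfer exponential tightness to $Q^N$ by H\"older's inequality with conjugate exponents $p,q > 1$:
\[
Q^N(\hat\mu_N \notin K'_{M'}) \le P^{\otimes N}(\hat\mu_N \notin K'_{M'})^{1/q}\, \E_{P^{\otimes N}}\!\left[e^{pN\bar\Gamma(\hat\mu_N)}\right]^{1/p}.
\]
The inequality $\E_{\gamma}[e^X]^p \le \E_{\gamma}[e^{pX}]$ (Jensen, $p \ge 1$) decouples the product form of $e^{N\bar\Gamma}$, and the Gaussian quadratic-exponential identity~\eqref{eq:GaussianExponentialQuadraticMoment} bounds the exponential moment of $p\bar\Gamma$ by $e^{CN}$, provided $p > 1$ is chosen close enough to $1$ that the short-time condition~\eqref{TimeCondition} still yields finite moments; this is essentially the argument behind~\eqref{ineq:GammaBehavior3}. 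Taking $M' = q(M + C/p)$ then delivers exponential tightness at level $M$.

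For point 2, the upper bound on compact sets is obtained by the classical Ben Arous-Guionnet localization already carried out in~\cite[Theorem 7]{cabana-touboul:15}. I would cover a given compact $K$ by finitely many $d_T^V$-balls $B(\nu_k, \delta)$ and, on each ball, combine Lemma~\ref{lemma2} with Theorem~\ref{lemma1}(1) to replace $\Gamma$ by the affine functional $\Gamma_{\nu_k}$ at the cost of an error $C_T(1 + I(\hat\mu_N|P))\delta$. The affine form of $\Gamma_{\nu_k}$ reduces the local estimate to a Sanov-type bound, while Theorem~\ref{thm:Qnu} identifies $H_{\nu_k}$ with $I(\cdot|Q_{\nu_k})$ and lets the error be absorbed on sublevel sets of $H$; passing to a finite subcover and sending $\delta \to 0$ yields $\limsup \frac{1}{N} \log Q^N(\hat\mu_N \in K) \le -\inf_K H$. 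To extend to an arbitrary closed $F$, I would invoke point 1: for every $M$,
\[
Q^N(\hat\mu_N \in F) \le Q^N(\hat\mu_N \in F \cap K_M) + Q^N(\hat\mu_N \notin K_M),
\]
so $\limsup_N \frac{1}{N} \log Q^N(\hat\mu_N \in F) \le \max(-\inf_{F \cap K_M} H, -M)$. Since $H$ is a good rate function (Theorem~\ref{lemma1}(2)), $\inf_{F \cap K_M} H \to \inf_F H$ when $M \to \infty$, yielding the stated bound.

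The main obstacle is the localization step. Because $G^{\mu}(x)$ depends on the test trajectory $x$, $\Gamma$ is not quadratic in $\mu$, and replacing it by $\Gamma_{\nu_k}$ necessarily introduces the factor $I(\hat\mu_N|P)$ visible in Theorem~\ref{lemma1}(1); controlling this factor is possible only because the short-time hypothesis~\eqref{TimeCondition} enters twice, first through Proposition~\ref{prop:GammaBehaviour}(2) (which bounds $|\Gamma|$ by a strict fraction of $I(\cdot|P)$) and again in the tightness step to secure the exponential moment $\E_{P^{\otimes N}}[e^{pN\bar\Gamma}]$ for some $p>1$.
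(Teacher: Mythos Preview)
Your overall architecture matches the paper's: Sanov plus H\"older for exponential tightness, localization on balls for the compact upper bound, then extension to closed sets via tightness and goodness of $H$. Point~1 is essentially identical to the paper's argument.

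There is, however, a genuine gap in your localization step. You propose to replace $\Gamma$ by $\Gamma_{\nu_k}$ on each ball via Theorem~\ref{lemma1}(1), incurring an error $C_T(1+I(\hat\mu_N|P))\,\delta$. But $\hat\mu_N$ is a finite sum of Dirac masses while $P$ is non-atomic, so $I(\hat\mu_N|P)=+\infty$ $P^{\otimes N}$-almost surely, and the bound from Theorem~\ref{lemma1}(1) is vacuous at $\mu=\hat\mu_N$. Theorem~\ref{lemma1}(1) is used in the paper only to prove that $H$ is a good rate function, not to carry out the localization.

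The paper's localization instead works \emph{inside} the $P^{\otimes N}$-integral, at the level of the random variables $X^{\hat\mu_N}(x^i,r_i)$ rather than of $\Gamma(\hat\mu_N)$. After introducing independent $\gamma$-copies $(\tilde X^{\hat\mu_N,i},\tilde X^{\nu,i})$ and applying H\"older, the relevant quantity is
\[
\int_{\hat\mu_N\in K\cap B(\nu,\delta)}\E_\gamma\!\Bigg[\prod_{i=1}^N \exp\!\big\{q\big(\tilde X^{\hat\mu_N,i}-\tilde X^{\nu,i}\big)\big\}\exp\!\big\{\tilde X^{\nu,i}\big\}\Bigg]\,dP^{\otimes N},
\]
which Lemma~\ref{lemma3.1} bounds by $e^{NC_q(\delta)}$ with $C_q(\delta)\to 0$. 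This is the replacement for your appeal to Theorem~\ref{lemma1}(1): it yields the same conclusion (negligible cost as $\delta\to 0$) but never evaluates $\Gamma$ or $I(\cdot|P)$ at the empirical measure. The remaining factor is then handled by Sanov for $Q_\nu^{\otimes N}$ exactly as you describe.
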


\begin{proof}

\noindent{(1):}

The proof of this theorem consists in using the exponential tightness of the sequence $(P^{\otimes N})_N$ provided e.g. by Sanov's Theorem and \cite[Exercice 1.2.18 (a)]{dembo-zeitouni:09}. Let $K_M$ be a compact of $\M_1^+(\C \times D)$ such that
\begin{equation*}
\frac{1}{N} \log P^{\otimes N}(\hat{\mu}_N \notin K_M) \leq -M,
\end{equation*}
and remark that H\"older inequality yields for any conjugate exponents $(p,q)$ with $\frac{(p+1)p^2\sigma^2 \bmax^2 T}{\lambda^2}<1$:
\begin{align*}
Q^N(\hat{\mu}_N \not\in K_M) & \leq \bigg( \int_{(\C \times D)^N} \exp\big\{p N\bar{\Gamma}(\hat{\mu}_N)  \big\} dP^{\otimes N}(\mathbf{x},\mathbf{r}) \bigg)^{\frac{1}{p}} P^{\otimes N}(\hat{\mu}_N \not\in K_M)^{\frac{1}{q}} \\
& \overset{\text{Jensen}}{\leq} \bigg( \int_{(\C \times D)^N} \prod_{i=1}^N \E_{\gamma} \bigg( \exp\big\{ p X^{\hat{\mu}_N}(x^i,r_i) \big\} \bigg) dP^{\otimes N}(\mathbf{x},\mathbf{r}) \bigg)^{\frac{1}{p}} P^{\otimes N}(\hat{\mu}_N \not\in K_M)^{\frac{1}{q}}
\end{align*}
Let $(\tilde{X}^{\hat{\mu}_N,i})_{1 \leq i \leq N}$ be independent copies of $X^{\hat{\mu}_N}$ under the measure $\gamma$. Then, by independence, H\"older inequality and martingale property, we have
\begin{align}
\int_{(\C \times D)^N} & \prod_{i=1}^N \E_{\gamma} \bigg( \exp\big\{ p X^{\hat{\mu}_N}(x^i,r_i) \big\} \bigg) dP^{\otimes N}(\mathbf{x},\mathbf{r}) =  \E_{\gamma} \bigg[ \int_{(\C \times D)^N} \exp\bigg\{ p \sum_{i=1}^N  \tilde{X}^{\hat{\mu}_N,i}(x^i,r_i) \bigg\} dP^{\otimes N}(\mathbf{x},\mathbf{r}) \bigg] \nonumber\\
& \leq \Bigg( \int_{(\C \times D)^N} \prod_{i=1}^N \E_{\gamma} \bigg[ \exp\big\{ \frac{p^2(p+1)}{2} \int_0^T \big(G^{\hat{\mu}_N}_t(x^i)+m_{\hat{\mu}_N}(t,x^i)\big)^2 dt \big\} \bigg] dP^{\otimes N}(\mathbf{x},\mathbf{r}) \Bigg)^{\frac{p-1}{p}}. \label{ineq:calculus1}
\end{align}
We can now proceed as in the proof of Proposition~\ref{prop:GammaBehaviour}. point 2. to find that exists a constant $c_T$ such that
\[
\int_{(\C \times D)^N} \prod_{i=1}^N \E_{\gamma} \bigg( \exp\big\{ p X^{\hat{\mu}_N}(x^i,r_i) \big\} \bigg) dP^{\otimes N}(\mathbf{x},\mathbf{r}) \leq e^{(p-1) c_T N}.
\]
As a consequence,
\begin{align*}
\limsup_{N \to +\infty} \frac{1}{N} \log Q^N(\hat{\mu}_N \not\in K_M) \leq (p-1)c_T - \frac{M}{q}.
\end{align*}

\noindent{(2):}
As $\big(Q^N(\hat{\mu}_N \in \cdot)\big)_N$ is exponentially tight and $H$ is good, it is sufficient to prove the upper-bound for compact sets (see \cite[Lemma 1.2.18 (a)]{dembo-zeitouni:09}). We obtain this upper-bound exactly as in \cite[Theorem 7]{cabana-touboul:15} relying on the following lemma.
\end{proof}

\begin{lemma}\label{lemma3.1}
For any real number $q >1$, if $\frac{2\sigma^2 \bmax^2 T}{\lambda^2} < 1$, then exist a strictly positive real number $\delta_q$ and a function $C_q:\R^+ \to \R^+$ such that $\lim_{\delta \rightarrow 0} C_q(\delta) =0$, and for any $\delta < \delta_q$:
\begin{equation*}
\int_{\hat{\mu}_N \in K \cap B(\nu,\delta)} \E_{\gamma} \Bigg[ \prod_{i=1}^N \bigg( \exp{ q \Big( \tilde{X}^{\hat{\mu}_N,i}(x^i,r_i) - \tilde{X}^{\nu,i}(x^i,r_i) \Big) } \exp{\tilde{X}^{\nu,i}(x^i,r_i)} \bigg)  \Bigg]  dP^{\otimes N}(\mathbf{x},\mathbf{r}) \leq \exp\{C_q(\delta) N\}.
\end{equation*}
\end{lemma}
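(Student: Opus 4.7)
The plan is to combine the two exponentials via the algebraic rearrangement $q(\tilde X^{\hat\mu_N,i}-\tilde X^{\nu,i})+\tilde X^{\nu,i}=q\tilde X^{\hat\mu_N,i}-(q-1)\tilde X^{\nu,i}$, and to split the resulting integrand into a $P_{r_i}$-exponential martingale plus a quadratic correction that vanishes as $\hat\mu_N$ approaches $\nu$. First, following the coupling construction in the proof of Proposition~\ref{prop:MeanAndVarRegularity}, I realize on the $\gamma$-space, for each $i$, a bi-dimensional Gaussian pair $(G^{\hat\mu_N,i},G^{\nu,i})$ with covariance~\eqref{kxi} built from an almost-optimal coupling $\xi$ of $\hat\mu_N$ and $\nu$, taking independent such pairs across $i$. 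On the integration region, \eqref{ineq:MajDiff} then provides the uniform bounds $\sup_{t,x}\E_\gamma[(G^{\hat\mu_N,i}_t(x)-G^{\nu,i}_t(x))^2]\le C_T\delta^2$ and $\sup_{t,x}|m_{\hat\mu_N}(t,x)-m_\nu(t,x)|\le C_T\delta$.

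Setting $U^i_t:=q(G^{\hat\mu_N,i}_t+m_{\hat\mu_N}(t,x^i))-(q-1)(G^{\nu,i}_t+m_\nu(t,x^i))$ and $M^i_t:=\int_0^tU^i_s\,dW_s(x^i,r_i)$, a direct expansion of $\tfrac12(U^i)^2-\tfrac{q}{2}(G^{\hat\mu_N,i}+m_{\hat\mu_N})^2+\tfrac{q-1}{2}(G^{\nu,i}+m_\nu)^2=\tfrac{q(q-1)}{2}(\Delta G^i+\Delta m)^2$ yields the identity
\[
q\tilde X^{\hat\mu_N,i}-(q-1)\tilde X^{\nu,i}=M^i_T-\tfrac12\langle M^i\rangle_T+\tfrac{q(q-1)}{2}\int_0^T(\Delta G^i_t+\Delta m_t)^2\,dt,
\]
so the integrand in the lemma factors as the Doléans--Dade exponential $\mathcal E(M^i)_T$ times the correction $R^i:=\exp\{\tfrac{q(q-1)}{2}\int_0^T(\Delta G^i+\Delta m)^2dt\}$.

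Then I apply Hölder's inequality on $\gamma\otimes P^{\otimes N}$ with conjugate exponents $(p,p')$, choosing $p>1$ close enough to $1$ that $\tfrac{2p^2\sigma^2\bmax^2T}{\lambda^2}<1$ still holds, to bound the integral by a product of two factors. For the martingale factor, I rewrite $\mathcal E(M^i)_T^p=\mathcal E(pM^i)_T\cdot\exp\{\tfrac{p(p-1)}{2}\langle M^i\rangle_T\}$ and handle the excess Gaussian quadratic exponential as in the proof of Proposition~\ref{prop:GammaBehaviour} item~2, using~\eqref{eq:GaussianExponentialQuadraticMoment} and the short-time hypothesis, to obtain a bound of the form $\exp\{N(p-1)c_T\}$. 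For the correction factor, $\gamma$-independence of the Gaussian pairs across $i$ factorises the expectation, and each per-particle term $\E_\gamma[\exp\{\tfrac{p'q(q-1)}{2}\int_0^T(\Delta G^i+\Delta m)^2dt\}]$ is controlled through $(a+b)^2\le 2a^2+2b^2$, Jensen's inequality in $dt/T$, and~\eqref{eq:GaussianExponentialQuadraticMoment} applied to the centred Gaussian $\Delta G^i_t(x)$ with variance $\le C_T\delta^2$, yielding a per-particle bound tending to $1$ as $\delta\to 0$. Sending first $\delta$ small and then $p\downarrow 1$ produces the announced bound $\exp\{C_q(\delta)N\}$ with $C_q(\delta)\to0$.

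The hard part will be handling the self-referential dependence of $U^i$ and $R^i$ on $\hat\mu_N$, itself a functional of every trajectory $x^j$: this entanglement between particles prevents a clean invocation of Girsanov's theorem in the filtration of $x^i$ alone, and forces one to adopt the joint-martingale viewpoint (the Brownians $W(\cdot,r_i)$ being independent makes $\prod_i\mathcal E(M^i)$ a joint $P^{\otimes N}$-martingale) together with the uniform substitution of $\hat\mu_N$ by $\nu$ at cost $O(\delta)$ provided by Proposition~\ref{prop:MeanAndVarRegularity} on the restricted region $\hat\mu_N\in K\cap B(\nu,\delta)$; both ingredients are essential, in conjunction with the short-time hypothesis~\eqref{TimeCondition}, to secure the vanishing of the per-particle correction as $\delta\to0$.
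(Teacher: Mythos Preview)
Your decomposition is genuinely different from the paper's. The paper applies H\"older with exponents $(\rho,\eta)$ \emph{before} any algebraic manipulation, sending $\exp\{\tilde X^{\nu,i}\}$ into one factor $B_1^N=\int\prod_i\E_\gamma[\exp\{\rho X^\nu(x^i,r_i)\}]\,dP^{\otimes N}$ and $\exp\{q\eta(\tilde X^{\hat\mu_N,i}-\tilde X^{\nu,i})\}$ into the other factor $B_2^N$. The virtue of this split is that $B_1^N$ involves only $\nu$: it carries no $q$ and needs no restriction to $B(\nu,\delta)$, so the short-time hypothesis alone (for $\rho$ close to $1$) controls it via the argument of~\eqref{ineq:calculus1}. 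All of the $q$-dependence is pushed into $B_2^N$, where it is harmless because it multiplies the \emph{difference}, whose Gaussian part has variance $O(\delta^2)$ on the integration region. Your Dol\'eans--Dade identity is cleaner algebraically, and the correction factor $R^i$ is handled exactly as you say; the two routes are equally viable in principle.

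There is, however, a real point you gloss over in the martingale factor. Your integrand $U^i_t=q(G^{\hat\mu_N,i}_t+m_{\hat\mu_N})-(q-1)(G^{\nu,i}_t+m_\nu)$ has $\gamma$-variance bounded a~priori only by $(2q-1)^2\sigma^2\bmax^2/\lambda^2$, not by $\sigma^2\bmax^2/\lambda^2$. So when you write that it suffices to take $p$ with $\tfrac{2p^2\sigma^2\bmax^2T}{\lambda^2}<1$ and then invoke~\eqref{eq:GaussianExponentialQuadraticMoment} on the excess quadratic $\tfrac{p(p-1)}{2}\langle M^i\rangle_T$, this fails for any $q$ bounded away from $1$ if you have dropped the indicator on the martingale side of H\"older. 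The fix is to keep $\ind{\hat\mu_N\in B(\nu,\delta)}$ on \emph{both} H\"older factors (legitimate since $\ind{A}=\ind{A}^{1/p}\ind{A}^{1/p'}$): on $B(\nu,\delta)$ your coupling gives $U^i_t=G^{\nu,i}_t+q(G^{\hat\mu_N,i}_t-G^{\nu,i}_t)+\text{(bounded mean)}$, whence $\mathrm{Var}_\gamma(U^i_t)\le\sigma^2\bmax^2/\lambda^2+O_q(\delta)$, and the short-time hypothesis does the job once $\delta<\delta_q$. Relatedly, ``sending first $\delta$ small and then $p\downarrow 1$'' does not literally produce a function $C_q(\delta)\to 0$; you must couple $p=p(\delta)\downarrow 1$ (e.g.\ $p-1\sim\sqrt\delta$), exactly as the paper does when it says ``one has to choose the proper relation between $\rho-1$ and $\delta$''. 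With these two adjustments your argument goes through.
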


\begin{proof}
Let
\[
B_N:=\int_{\hat{\mu}_N \in K \cap B(\nu,\delta)} \E_{\gamma} \Bigg[ \prod_{i=1}^N \bigg( \exp{ q \Big( \tilde{X}^{\hat{\mu}_N,i}(x^i,r_i) - \tilde{X}^{\nu,i}(x^i,r_i) \Big) } \exp{\tilde{X}^{\nu,i}(x^i,r_i)} \bigg)  \Bigg]  dP^{\otimes N}(\mathbf{x},\mathbf{r}).
\]
We again split this quantity relying on H\"older inequality with conjugate exponents $(\rho, \eta)$:
\begin{multline}
B_N  \leq  \Bigg\{ \overbrace{\int_{(\C \times D)^N} \prod_{i=1}^N \E_{\gamma} \Bigg[   \exp{ \rho X^{\nu}(x^i,r_i)} \Bigg] dP^{\otimes N}(\mathbf{x},\mathbf{r})}^{B^N_1} \Bigg\}^{\frac{1}{\rho}} \\
\times \Bigg\{ \underbrace{\int_{\hat{\mu}_N \in B(\nu,\delta)} \E_{\gamma} \Bigg[ \prod_{i=1}^N \exp{ q \eta \Big( \tilde{X}^{\hat{\mu}_N,i}(x^i,r_i) - \tilde{X}^{\nu,i}(x^i,r_i) \Big) }   \Bigg] dP^{\otimes N}(\mathbf{x},\mathbf{r})}_{B^N_2} \Bigg\}^{\frac{1}{\eta}}. \label{ineqlemma}
\end{multline}

On the one hand, we can proceed exactly as in calculus \eqref{ineq:calculus1} to obtain the existence of a constant $c_T$ uniform in $\rho$ and $N$ such that
\begin{align*}
B^N_1 & \leq e^{N(\rho-1) c_T},
\end{align*}
so that one has to choose the proper relation between $\rho-1$ and $\delta$. On the other hand, the second term can be handled exactly as in \cite[Lemma 5]{cabana-touboul:15}.
\end{proof}

\section{Existence and characterization of the limit}\label{sec:LimitIdentification}

\subsection{Uniqueness of the minimum}

This section is devoted to prove existence and uniqueness of the minima of $H$ in order to obtain exponential convergence of the empirical measure. At this end, we proceed as in \cite[Lemma 6]{cabana-touboul:15} to obtain a convenient characterization of the minima of $H$:

\begin{lemma}\label{Qcharac}
	Let $\mu$ be a probability measure on $\C \times D$ which minimizes $H$. Then
\begin{equation}\label{eq:densityMinimum}
	\mu \simeq P, \qquad \mu = Q_{\mu},
\end{equation}
where $\mu \to Q_{\mu}$ introduced in \eqref{def:Qnu} is well-defined from $\M_1^+(\C \times D) \to \M_1^+(\C \times D)$.
\end{lemma}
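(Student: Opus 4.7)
The plan is to combine the identity $H(\cdot) = I(\cdot | Q_\cdot)$ with the partial LDP of Theorem~\ref{thm:WeakLDP} to pin down the minimizers directly, bypassing any explicit first-variation calculation (which would be complicated by the $x$-dependence of $G^\mu$). Theorem~\ref{thm:Qnu} applied at $\nu = \mu$ yields the pointwise identity $H(\mu) = H_\mu(\mu) = I(\mu | Q_\mu)$ on $\M_1^+(\C \times D)$, so $H \geq 0$ with equality if and only if $\mu = Q_\mu$.

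Next I would extract $\inf H = 0$ from the LDP upper bound. Choosing $F = \M_1^+(\C \times D)$ in Theorem~\ref{thm:WeakLDP}(2) gives
\[
0 \,=\, \limsup_{N\to\infty} \frac{1}{N} \log Q^N\bigl(\hat{\mu}_N \in \M_1^+(\C \times D)\bigr) \,\leq\, -\inf_{\M_1^+(\C \times D)} H,
\]
so $\inf H \leq 0$, and combined with $H \geq 0$ this yields $\inf H = 0$.

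The characterization is then immediate: any minimizer $\mu$ satisfies $H(\mu) = 0$, hence $I(\mu | Q_\mu) = 0$, which forces $\mu = Q_\mu$. The equivalence $\mu \simeq P$ comes for free --- the value $H(\mu) = 0 < \infty$ entails $I(\mu | P) < \infty$, hence $\mu \ll P$, while the density $dQ_\mu / dP = \E_\gamma[\exp X^\mu(\cdot, \cdot)]$ is strictly positive $P$-a.s., so $P \ll Q_\mu = \mu$.

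The principal obstacle in executing this plan is confirming the absence of circularity: the identity $H = I(\cdot | Q_\cdot)$ relies on Theorem~\ref{thm:Qnu}, and the extraction of $\inf H = 0$ relies on Theorem~\ref{thm:WeakLDP}, whose proof must itself not invoke Lemma~\ref{Qcharac}. This is indeed the case: Section~\ref{sec:LDP} constructs $H$, proves its goodness via Theorem~\ref{lemma1}, and derives the closed-set upper bound together with exponential tightness (Theorem~\ref{thm:WLDP} and Lemma~\ref{lemma3.1}) using only Propositions~\ref{prop:GammaBehaviour} and~\ref{prop:MeanAndVarRegularity}, none of which appeal to a characterization of minimizers. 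Had this route not been available, one would have had to replicate the direct variational argument of \cite[Lemma~6]{cabana-touboul:15} along the path $\mu_t = (1-t)\mu + t Q_\mu$; in the state-dependent setting this would require controlling a tilt contribution $\int \partial_\mu \log\E_\gamma[\exp X^\mu] \cdot (Q_\mu - \mu) \, d\mu$ through stochastic Fubini, Dambis--Dubins--Schwarz, and Isserlis' formula, mirroring the technical work of the proof of Theorem~\ref{lemma1}.
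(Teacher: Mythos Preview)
Your argument is correct. The pointwise identity $H(\mu)=H_\mu(\mu)=I(\mu\mid Q_\mu)$ follows immediately from the definitions of $\Gamma$ and $\Gamma_\nu$ together with Theorem~\ref{thm:Qnu}, and combined with the closed-set upper bound applied to $F=\M_1^+(\C\times D)$ it pins down $\inf H=0$ and hence forces any minimizer to satisfy $\mu=Q_\mu$. The equivalence $\mu\simeq P$ is handled correctly. Your check that Section~\ref{sec:LDP} nowhere invokes Lemma~\ref{Qcharac} is accurate, so there is no circularity.

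The paper itself gives no argument here and simply defers to \cite[Lemma~6]{cabana-touboul:15}. Your route is genuinely different in that it leverages the LDP upper bound (already available at this point of the paper) to extract $\inf H=0$, whereas the companion paper's lemma proceeds without invoking the LDP. What your approach buys is a completely self-contained proof within the present paper, with no analytic work beyond what is already done; it also sidesteps any need to differentiate $\Gamma$ in the state-dependent setting. What it costs is the time restriction~\eqref{TimeCondition}, since the upper bound of Theorem~\ref{thm:WeakLDP} is only established under that hypothesis --- but as $H$ is only shown to be a good rate function under the same condition (Theorem~\ref{lemma1}(2)), this is not a loss in the present context.
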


We then have:

\begin{theorem}\label{thm:UniquenessOfTheMinimum}
The map $\mu \to Q_{\mu}$ admits a unique fixed point.
\end{theorem}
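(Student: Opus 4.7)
The plan is to establish a contraction-style estimate for the map $\mu \mapsto Q_{\mu}$ in the Vaserstein distance $d_t^V$, and then conclude by Gronwall's inequality. More precisely, I aim to prove that there exists $C_T>0$ such that for every $\mu_1,\mu_2 \in \M_1^+(\C \times D)$ and every $t \in [0,T]$,
\begin{equation}\label{eq:contraction-plan}
d_t^V(Q_{\mu_1},Q_{\mu_2})^2 \leq C_T \int_0^t d_s^V(\mu_1,\mu_2)^2 \, ds.
\end{equation}
Applied to two fixed points $\mu_i = Q_{\mu_i}$, this gives $u(t) \leq C_T \int_0^t u(s)\, ds$ with $u(t) := d_t^V(\mu_1,\mu_2)^2$, and Gronwall's inequality forces $u \equiv 0$ (noting that $u(0)=0$ since both $Q_{\mu_i}$ share the $t=0$ marginal of $P$), hence $\mu_1 = \mu_2$.

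To establish \eqref{eq:contraction-plan}, the natural strategy is a synchronous coupling. First I would observe that $Q_{\mu_i}$ has $D$-marginal $\pi$ and admits, on the enlarged probability space $\C \times D \times \hat{\Omega}$ endowed with the measure $\exp\{X^{\mu_i}(x,r)\}\, dP(x,r) \otimes d\gamma$, a Girsanov-type representation under which the coordinate process $x$ solves a McKean-Vlasov-type equation driven by a common Brownian motion and the random drift $\lambda(G^{\mu_i}_t(x) + m_{\mu_i}(t,x))$. Letting $\xi$ be a near-optimal coupling of $(\mu_1,\mu_2)$ for $d_T^V$, I would build $X^1,X^2$ simultaneously on a single extended probability space sharing the same initial condition, the same type $r \sim \pi$ and the same driving Brownian motion $W$, together with a jointly Gaussian pair $(G^{\mu_1},G^{\mu_2})$ on $(\hat{\Omega},\hat{\F},\gamma)$ whose joint covariance naturally extends $K_\xi$ of equation~\eqref{kxi} so as to preserve the marginals $G^{\mu_i} \sim \mathcal{N}(0,K_{\mu_i}(\cdot,\cdot,X^i))$. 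The law of $(X^i,r)$ under this joint measure is then $Q_{\mu_i} = \mu_i$, so $(X^1,X^2,r)$ provides a coupling for which
\[
d_t^V(Q_{\mu_1},Q_{\mu_2})^2 \leq \Exp\big[\|X^1 - X^2\|_{\infty,t}^2\big].
\]

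The difference $\delta X_t := X^1_t - X^2_t$ satisfies an integral equation without Brownian contribution (same $W$), and the drift difference splits, using Lipschitz continuity of $f$ and $b$, into a state-Lipschitz part controlled by $\|\delta X\|_{\infty,s}$ and a measure-Lipschitz part controlled by $d_s^V(\mu_1,\mu_2)$. For the Gaussian contribution, the decomposition
\[
G^{\mu_1}_t(X^1) - G^{\mu_2}_t(X^2) = \big(G^{\mu_1}_t(X^1) - G^{\mu_1}_t(X^2)\big) + \big(G^{\mu_1}_t(X^2) - G^{\mu_2}_t(X^2)\big)
\]
bounds the first summand in $L^2(\gamma)$ by $C\|X^1-X^2\|_{\infty,t}$ via Lipschitz continuity of $b$ and the covariance formula for $K_\mu$, and the second summand in $L^2(\gamma)$ by $C\, d_t^V(\mu_1,\mu_2)$ via the coupling $\xi$ exactly as in the proof of Proposition~\ref{prop:MeanAndVarRegularity}. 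The analogous and easier decomposition handles the mean term $m_{\mu_1}(t,X^1) - m_{\mu_2}(t,X^2)$. Taking joint expectation under $dP \otimes d\gamma$, applying Cauchy-Schwarz, and invoking Gronwall's inequality then yields \eqref{eq:contraction-plan}.

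The main obstacle is the measure-theoretic bookkeeping: since $G^{\mu}_t(x)$ depends jointly on $\omega \in \hat{\Omega}$ and on the trajectory $x$, constructing the coupled pair $(G^{\mu_1}(X^1),G^{\mu_2}(X^2))$ with the prescribed joint covariance while preserving the measurability and progressive-measurability required for Girsanov and It\^o calculus requires the explicit series representation of Remark~\ref{rem:structureG} and the same kind of manipulations that underpin Lemma~\ref{lemma3}. Once this framework is in place, the quantitative estimates on the Gaussian and mean differences are governed by Proposition~\ref{prop:MeanAndVarRegularity}, and the remainder of the argument reduces to a standard Gronwall iteration.
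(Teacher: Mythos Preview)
Your overall strategy---build a synchronous coupling of $Q_{\mu_1}$ and $Q_{\mu_2}$ via coupled SDEs sharing the same Brownian motion, derive a contraction estimate in $d_t^V$, and close by Gronwall---is exactly the paper's strategy. The gap lies in \emph{which} SDE you couple. You propose to keep the auxiliary Gaussian field $G^{\mu_i}$ as a random drift and work on the enlarged space $\C\times D\times\hat\Omega$; the paper instead integrates $\gamma$ out \emph{before} writing the SDE, obtaining the deterministic (path- and $\tilde W$-dependent) drift
\[
O^{\tilde W}_\mu(t,x)=\E_\gamma\Big[\Lambda_t(G^\mu(x))\,G^\mu_t(x)\!\int_0^t G^\mu_s(x)\,(d\tilde W_s-m_\mu(s,x)\,ds)\Big]+m_\mu(t,x),
\]
and compares the two strong solutions $x^\mu(r),x^\nu(r)$ of the resulting ordinary (path-dependent) SDEs. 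This is not a cosmetic difference.

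In your scheme the processes $X^i$ depend on $\omega\in\hat\Omega$ through $G^{\mu_i}$, so the key ``covariance'' bound you invoke,
\[
\E_\gamma\big[(G^{\mu_1}_t(X^1)-G^{\mu_1}_t(X^2))^2\big]\le C\,|X^1_t-X^2_t|^2,
\]
is not available: it is valid only when $X^1,X^2$ are $\gamma$-deterministic paths, whereas here they are themselves functionals of the same $\omega$. Under $\gamma$ the variable $G^{\mu_1}_t(X^1)$ is not Gaussian and $K_{\mu_1}$ is no longer its covariance. In the series representation of Remark~\ref{rem:structureG} this shows up as $\sum_i J_i\,c_i(X^1,X^2)$ with the $c_i$'s depending on all the $J_k$'s, so independence is lost. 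Worse, $x\mapsto G^\mu_t(x,\omega)$ is not $\omega$-a.s.\ Lipschitz (the natural bound has constant $\sum_i|J_i|\,\|e^\mu_i\|_{L^1_\mu}=+\infty$), so even the well-posedness of your coupled SDE with random drift, and hence the claim ``law of $(X^i,r)$ is $Q_{\mu_i}$'', is not justified. The paper's route circumvents this feedback entirely: once $\gamma$ is integrated, the drifts $O^{\tilde W}_\mu$ are controlled through the $\Lambda_t$ and $\widetilde K^t_{\mu,x}$ estimates (Proposition~\ref{LambdaProperties}), and the comparison yields $\Exp\big[\|x^\mu(r)-x^\nu(r)\|_{\infty,t}^2\big]\le C_T\int_0^t d_s^V(\mu,\nu)^2\,ds$, from which both uniqueness \emph{and} existence (via Picard iteration, which you omit) follow.
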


\begin{proof}
As in \cite[Lemma 3]{cabana-touboul:15} or \cite[Lemma 5.15]{ben-arous-guionnet:95}, we can show that
\begin{align*}
\der{Q_{\mu}}{P}(x,r) = \exp\bigg\{\int_0^T O_{\mu}(t,x,r)dW_t(x,r)-\frac{1}{2}\int_0^T O_{\mu}^2(t,x,r)dt\bigg\},
\end{align*}
where 
\begin{align*}
O_{\mu}(t,x,r)& =  \E_{\gamma} \Big[ \Lambda_t\big( G^{\mu}(x) \big) G^{\mu}_t(x) L^{\mu}_t(x,r) \Big]+ m_{\mu}(t,x).
\end{align*}
Moreover, as done in \cite[Theorem 6]{cabana-touboul:15}, we introduce $Q_{\mu,r} \in \M_1^+(\C)$ such as $dQ_{\mu}(x,r)=dQ_{\mu,r}(x)d\pi(r)$ for every $(x,r)\in \C \times D$. Girsanov's theorem naturally leads to introduce the following SDE whose putative solution have a law equal to $Q_{\mu,r}$:
\begin{equation}\label{SDE:uniqueness}
\left\{
  \begin{array}{ll}
  dx_t^{{\mu}}(r) = f(r,t,x_t^{{\mu}}(r)) dt + \lambda O^{\tilde{W}}_{\mu}(t,x_t^{{\mu}}(r)) dt + \lambda d\tilde{W}_t\\
    x_0^{{\mu}}(r) = \bar{x}_0(r).
  \end{array}
\right.
\end{equation}
where $\tilde{W}$ is a $\mathbbm{P}$-Brownian motion, where
\[
O^{\tilde{W}}_{\mu}(t,x):= \E_{\gamma} \Big[ \Lambda_t\big( G^{\mu}(x) \big) G^{\mu}_t(x) \tilde{L}^{\mu}_t(x) \Big] + m_{\mu}(t,x),
\]
\[
\tilde{L}^{\mu}_t(x):= \int_0^t G^{\mu}_s(x) \big(d\tilde{W}_s- m_{\mu}(s,x)ds\big),
\]
and $\bar{x}_0(r) \in \R$ is the realization of the continuous version for the family of initial laws $\big(\mu_0(r)\big)_{r \in D}$ evaluated at $r$ (see hypothesis (3) of \cite[p.6]{cabana-touboul:15}). We show in Lemma~\ref{lem:existence_uniqueness} (see below) that for any $(r,\mu) \in D\times \M_{1}^{+}(\C\times D)$, there exists a unique strong solution $(x_t^{{\mu}}(r))_{t\in [0,T]}$ to equation~\eqref{SDE:uniqueness}. Let $\nu \in \M_1^+\big( \C \times D \big)$, and define similarly $x^{\nu}_t(r)$ with same initial condition and Brownian path. Then
\begin{align}
& \big( x^{\mu}_t(r) - x^{\nu}_t(r) \big) = \int_0^t \Big(f(r,s,x^{\mu}_{s}(r))+ \lambda m_{\mu}(s,x^{\mu}_{{\cdot}}(r))- f(r,s,x^{\nu}_s(r))-\lambda m_{\nu}(s,x^{\nu}_{{\cdot}}(r)\Big) ds \nonumber\\
& + \lambda \int_0^t \bigg\{\E_{\gamma} \Big[ \Lambda_s\big( G^{\mu}(x^{\mu}_{\cdot}(r)) \big) G^{\mu}_s(x^{\mu}_{\cdot}(r)) \tilde{L}^{\mu}_s(x^{\mu}_{\cdot}(r))\Big] - \E_{\gamma} \Big[ \Lambda_s\big( G^{\nu}(x^{\mu}_{\cdot}(r)) \big) G^{\nu}_s(x^{\mu}_{\cdot}(r)) \tilde{L}^{\nu}_s(x^{\mu}_{\cdot}(r)) \Big]\bigg\} ds \nonumber\\
& + \lambda \int_0^t \int_0^s \bigg(\tilde{K}^{s}_{\nu,x^{\mu}_{\cdot}(r)}(s,v) m_{\nu}\big(v,x^{\mu}_{\cdot}(r)\big) -\tilde{K}^{s}_{\nu,x^{\nu}_{\cdot}(r)}(s,v) m_{\nu}\big(v,x^{\nu}_{\cdot}(r)\big) \bigg)dv ds \nonumber \\
& + \lambda \int_0^t \bigg\{\E_{\gamma} \bigg[ \Lambda_s\big( G^{\nu}(x^{\mu}_{\cdot}(r)) \big) G^{\nu}_s(x^{\mu}_{\cdot}(r)) \Big(\int_0^s G^{\nu}_v(x^{\mu}_{\cdot}(r)) d\tilde{W}_v \Big)\bigg] \nonumber\\
& - \E_{\gamma} \bigg[ \Lambda_s\big( G^{\nu}(x^{\nu}_{\cdot}(r)) \big) G^{\nu}_s(x^{\nu}_{\cdot}(r)) \Big(\int_0^s G^{\nu}_v(x^{\nu}_{\cdot}(r)) d\tilde{W}_v \Big) \bigg]\bigg\} ds  \label{ineq:FixPointAverage}.
\end{align}

Let $\xi \in \M_1^+\big((\C \times D)^2\big)$ with marginals $\mu$ and $\nu$. We have:
\begin{align}\label{ineq:uniquenessth1}
& \lambda \big(m_{\mu}(t,x^{\mu}_{\cdot}(r))- m_{\nu}(t,x^{\nu}_{\cdot}(r))\big) = \bar{J} \int_{(\C \times D)^{2}} \Big(b(x^{\mu}_t(r),y_t)- b(x^{\nu}_t(r),y_t)\Big) \nonumber \\
& + \Big(b(x^{\nu}_t(r),y_t)- b(x^{\nu}_t(r),z_t)\Big) d\xi\big((y,r'),(z,\tilde{r})\big) \nonumber\\
& \leq K_b \bar{J} \Big(\big| x^{\mu}_t(r)-x^{\nu}_t(r)\big| + \int_{(\C \times D)^{2}} \Vert y-z \Vert_{\infty,t} d\xi\big((y,r'),(z,\tilde{r})\big)\Big) \leq C \Big(\big| x^{\mu}_t(r)-x^{\nu}_t(r)\big| + d_t^V(\mu,\nu) \Big)
\end{align}
where we took the infimum on $\xi$.

Furthermore, let $\big(\tilde{G},\tilde{G}'\big)$ be a $\gamma$-bidimensional centered Gaussian process with covariance given by:
\begin{equation}\label{def:Kximunu}
\frac{\sigma^2}{\lambda^2} \int_{(\C \times D)^2} 
\left(
\begin{array}{ccc}
b(x^{\mu}_s(r),y_s)b(x^{\mu}_t(r),y_t)  & b(x^{\mu}_s(r),y_s)b(x^{\nu}_t(r),y_t) \\
b(x^{\nu}_s(r),y_s)b(x^{\mu}_t(r),y_t) & b(x^{\nu}_s(r),y_s)b(x^{\nu}_t(r),y_t) \\
\end{array}
\right) 
d\nu(y,r).
\end{equation}
Then

\begin{align*}
\Big| \tilde{K}^{t}_{\nu,x^{\mu}_{\cdot}(r)}(t,s) -\tilde{K}^{t}_{\nu,x^{\nu}_{\cdot}(r)}(t,s)\Big| = \Big|\E_{\gamma} \Big[ \big(\Lambda_t(\tilde{G}) -\Lambda_t(\tilde{G}')\big) \tilde{G}_t \tilde{G}_s  + \Lambda_t(\tilde{G}') \big(\tilde{G}_t - \tilde{G}'_t \big)\tilde{G}_s + \Lambda_t(\tilde{G}')\tilde{G}'_t \big( \tilde{G}_s - \tilde{G}'_s\big) \Big] \Big|
\end{align*}
Moreover,
\begin{align}\label{ineq:UniquenessIneq1}
\E_{\gamma} \Big[ \Lambda_t( \tilde{G}) \big(\tilde{G}_t -\tilde{G}'_t \big) \tilde{G}_s \Big] & \overset{\mathrm{C.S.},\eqref{ineq:BoundOnLambda}}{\leq} C_T \E_{\gamma} \Big[ \big(\tilde{G}_t -\tilde{G}'_t \big)^2 \Big]^{\frac{1}{2}} \leq C_T \left( \int_{\C \times D} \Big( b\big(x^{\mu}_t(r),y_t\big) -b\big(x^{\nu}_t(r),y_t\big) \Big)^2 d\nu(y,r') \right)^{\frac{1}{2}} \nonumber\\
& \leq C_T \big| x^{\mu}_t(r)-x^{\nu}_t(r) \big|
\end{align}
and
\begin{align*}
& \E_{\gamma} \Big[ \big(\Lambda_t( \tilde{G})-\Lambda_t( \tilde{G}')\big) \tilde{G}_t \tilde{G}_s \Big]
\overset{\eqref{ineq:DiffLambda}, \mathrm{C.S.}}{\leq} C_T \big| x^{\mu}_t(r)-x^{\nu}_t(r) \big| ,
\end{align*}
so that 
\begin{equation}\label{ineq:uniquenessth2}
\Big| \tilde{K}^{t}_{\nu,x^{\mu}_{\cdot}(r)}(t,s) -\tilde{K}^{t}_{\nu,x^{\nu}_{\cdot}(r)}(t,s)\Big| \leq C_T \big| x^{\mu}_t(r)-x^{\nu}_t(r) \big|.
\end{equation}

We now focus on controlling the second term of \eqref{ineq:FixPointAverage}. Let another $\xi \in \M_1^+\big((\C \times D)^2\big)$ with marginals $\mu$ and $\nu$, and the couple $\big(G,G'\big)$ of centered $\gamma$-Gaussian process with covariance $K_{\xi}\big(\cdot, \cdot,x^{\mu}_{\cdot}(r)\big)$ given in \eqref{kxi}. Replacing the couple $\big(G^{\mu}(x^{\mu}_{\cdot}(r)),G^{\nu}(x^{\mu}_{\cdot}(r))\big)$ by $(G,G')$ in the term of interest, we obtain:
\begin{align*}
\E_{\gamma} \Big[ \Lambda_s\big( G^{\mu}(x^{\mu}_{\cdot}(r)) \big) G^{\mu}_s(x^{\mu}_{\cdot}(r)) \tilde{L}^{\mu}_s(x^{\mu}_{\cdot}(r))\Big] & - \E_{\gamma}\Big[\Lambda_s\big( G^{\nu}(x^{\mu}_{\cdot}(r)) \big) G^{\nu}_s(x^{\mu}_{\cdot}(r)) \tilde{L}^{\nu}_s(x^{\mu}_{\cdot}(r)) \Big] \\
& = \E_{\gamma} \Big[ \Lambda_s(G) G_s L_s - \Lambda_s( G') G'_s L'_s \Big]
\end{align*}
where $L_t:= \int_0^t G_s \big(d\tilde{W}_s- m_{\mu}(s,x^{\mu}_{\cdot}(r))ds\big)$, and $L'_t:=\int_0^t G'_s \big(d\tilde{W}_s- m_{\nu}(s,x^{\mu}_{\cdot}(r))ds\big)$.

Moreover,
\begin{align*}
& \E_{\gamma} \Big[ \Lambda_t(G) G_t L_t - \Lambda_t(G') G'_t L'_t \Big]= \E_{\gamma} \Big[ \big(\Lambda_t(G) -\Lambda_t( G') \big) G_t L_t \Big] + \E_{\gamma} \Big[ \Lambda_t( G') \big(G_t -G'_t \big) L_t \Big] \\
& + \E_{\gamma} \Big[ \Lambda_t( G')  G'_t \big( L_t - L'_t\big) \Big] \overset{\mathrm{C.S.}}{\leq} \E_{\gamma} \big[L_t^2 \big]^{\frac{1}{2}} \bigg( \E_{\gamma} \Big[ \big(\Lambda_t( G ) -\Lambda_t( G') \big)^2 G_t^2 \Big]^{\frac{1}{2}}+ \E_{\gamma} \Big[ \Lambda_t( G')^2 \big(G_t -G'_t\big)^2 \Big]^{\frac{1}{2}}\bigg) \\
& + \E_{\gamma} \Big[ \Lambda_t( G')^2  {G'_t}^2 \Big]^{\frac{1}{2}} \E_{\gamma} \Big[\big( L_t - L'_t\big)^2\Big]^{\frac{1}{2}}
\end{align*}

On the one hand, we can show as in Proposition~\ref{prop:MeanAndVarRegularity} that
\begin{align*}
\E_{\gamma} \Big[ \Lambda_t( G)^2 \big(G_t -G'_t \big)^2 \Big] \leq C_T \left( \int_{(\C \times D)^{2}} \Vert y-z \Vert_{\infty,t}^2 d\xi\big((y,r'),(z,\tilde{r})\big) \right),
\end{align*}
and by Isserlis' theorem,
\begin{align*}
& \E_{\gamma} \Big[ \big(\Lambda_t( G)-\Lambda_t( G')\big)^2 G_t^2 \Big] \overset{\eqref{ineq:DiffLambda}}{\leq} C_T \left(\int_{(\C \times D)^{2}} \Vert y-z \Vert_{\infty,t}^2 d\xi\big((y,r'),(z,\tilde{r})\big) \right).
\end{align*}
On the other hand, 
\begin{align*}
& \E_{\gamma} \Big[\big( L_t - L'_t\big)^2\Big] \leq 2  \E_{\gamma} \bigg[\bigg(\int_0^t \big(G_s-G'_s\big) d\tilde{W}_s \bigg)^2\bigg] +4t \int_0^t \bigg\{ \E_{\gamma} \bigg[ \big(G_s-G'_s\big)^2 m_{\mu}(s,x^{\mu}_{\cdot}(r))^2 \bigg] \\
& + \E_{\gamma} \bigg[ {G'_s}^2 \Big(m_{\mu}\big(s,x^{\mu}_{\cdot}(r)\big) - m_{\nu}\big(s,x^{\mu}_{\cdot}(r)\big)\Big)^2  \bigg]\bigg\}ds \\
& \overset{\eqref{ineq:MajDiff}}{\leq} C_T \Bigg\{ \E_{\gamma} \bigg[\bigg(\int_0^t \big(G_s-G'_s\big) d\tilde{W}_s \bigg)^2\bigg] + \int_{(\C \times D)^{2}} \Vert y-z \Vert_{\infty,t}^2 d\xi\big((y,r'),(z,\tilde{r})\big)  \Bigg\},
\end{align*}
eventually yielding:
\begin{align*}
& \bigg|\E_{\gamma} \Big[ \Lambda_s\big( G^{\mu}(x^{\mu}_{\cdot}(r)) \big) G^{\mu}_s(x^{\mu}_{\cdot}(r)) \tilde{L}^{\mu}_s(x^{\mu}_{\cdot}(r))\Big] - \E_{\gamma}\Big[\Lambda_s\big( G^{\nu}(x^{\mu}_{\cdot}(r)) \big) G^{\nu}_s(x^{\mu}_{\cdot}(r)) \tilde{L}^{\nu}_s(x^{\mu}_{\cdot}(r)) \Big]\bigg|^2\\
& \leq C_T \bigg( 1+ \E_{\gamma} \bigg[ \Big(\int_0^s G_v d\tilde{W}_v \Big)^2 \bigg] \bigg)\left(\int_{(\C \times D)^{2}} \Vert y-z \Vert_{\infty,s}^2 d\xi\big((y,r'),(z,\tilde{r})\big)  \right) + C_T \E_{\gamma} \bigg[\bigg(\int_0^s \big(G_v-G'_v\big) d\tilde{W}_v \bigg)^2\bigg].
\end{align*}
As a consequence, equation \eqref{ineq:FixPointAverage} becomes:
\begin{align*}
& \Vert x^{\mu}(r) - x^{\nu}(r) \Vert_{\infty,t}^2 \leq C_T \int_0^t \Bigg\{ \Vert x^{\mu}(r)-x^{\nu}(r) \Vert_{\infty,s}^2 + \E_{\gamma} \bigg[\bigg(\int_0^s \big(G_v-G'_v\big) d\tilde{W}_v \bigg)^2\bigg]\\
& + \bigg( 1+ \E_{\gamma} \bigg[ \Big(\int_0^s G_v d\tilde{W}_v \Big)^2 \bigg] \bigg)\left(\int_{(\C \times D)^{2}} \Vert y-z \Vert_{\infty,s}^2 d\xi\big((y,r'),(z,\tilde{r})\big) \right)\\
& + \bigg| \E_{\gamma} \bigg[ \Lambda_s\big( G^{\nu}(x^{\mu}_{\cdot}(r)) \big) G^{\nu}_s(x^{\mu}_{\cdot}(r)) \Big(\int_0^s G^{\nu}_v(x^{\mu}_{\cdot}(r)) d\tilde{W}_v \Big)\bigg] \\
& - \E_{\gamma} \bigg[ \Lambda_s\big( G^{\nu}(x^{\nu}_{\cdot}(r)) \big) G^{\nu}_s(x^{\nu}_{\cdot}(r)) \Big(\int_0^s G^{\nu}_v(x^{\nu}_{\cdot}(r)) d\tilde{W}_v \Big) \bigg]\bigg|^2 \Bigg\} ds.
\end{align*}
Relying on Gronwall's lemma, taking the expectation over both initial conditions and the Brownian path, making use of Fubini's theorem, It\^o isometry, and eventually taking the infimum in $\xi$ yields:
\begin{align*}
& \Exp \bigg[ \Vert x^{\mu}(r) - x^{\nu}(r) \Vert_{\infty,t}^2 \bigg] \leq C_T \int_0^t \Bigg\{ \left(\int_{(\C \times D)^{2}} \Vert y-z \Vert_{\infty,s}^2 d\xi\big((y,r'),(z,\tilde{r})\big) \right)\\
+ & \Exp \Bigg[ \bigg| \E_{\gamma} \bigg[ \Lambda_s\big( G^{\nu}(x^{\mu}_{\cdot}(r)) \big) G^{\nu}_s(x^{\mu}_{\cdot}(r)) \Big(\int_0^s G^{\nu}_v(x^{\mu}_{\cdot}(r)) d\tilde{W}_v \Big)\bigg] \\
& - \E_{\gamma} \bigg[ \Lambda_s\big( G^{\nu}(x^{\nu}_{\cdot}(r)) \big) G^{\nu}_s(x^{\nu}_{\cdot}(r)) \Big(\int_0^s G^{\nu}_v(x^{\nu}_{\cdot}(r)) d\tilde{W}_v \Big) \bigg]\bigg|^2 \Bigg] \Bigg\} ds.
\end{align*}

To cope with the last term of the right-hand side, let again $\big(\tilde{G},\tilde{G}'\big)$ be a bidimensional centered Gaussian process on the probability space $\big(\hat{\Omega}, \hat{\mathcal{F}}, \gamma \big)$ with covariance given by~\eqref{def:Kximunu}. Let also $\Exp_{\gamma} \big[ \cdot \big] := \Exp\Big[ \E_{\gamma} \big[ \cdot \big] \Big]$. Then 
\begin{align}
& \Exp \Bigg[ \bigg| \E_{\gamma} \bigg[ \Lambda_s\big( G^{\nu}(x^{\mu}_{\cdot}(r)) \big) G^{\nu}_s(x^{\mu}_{\cdot}(r)) \Big(\int_0^s G^{\nu}_v(x^{\mu}_{\cdot}(r)) d\tilde{W}_v \Big)\bigg] \nonumber\\
& - \E_{\gamma} \bigg[ \Lambda_s\big( G^{\nu}(x^{\nu}_{\cdot}(r)) \big) G^{\nu}_s(x^{\nu}_{\cdot}(r)) \Big(\int_0^s G^{\nu}_v(x^{\nu}_{\cdot}(r)) d\tilde{W}_v \Big) \bigg]\bigg|^2 \Bigg] \nonumber \\
& = \Exp \Bigg[ \E_{\gamma} \bigg[ \Lambda_s( \tilde{G} ) \tilde{G}_s \Big( \int_0^s \tilde{G}_v d\tilde{W}_v \Big) - \Lambda_s( \tilde{G}') \tilde{G}'_s \Big( \int_0^s \tilde{G}'_v d\tilde{W}_v \Big) \bigg]^2 \Bigg] \nonumber \\
& \overset{\mathrm{C.S}}{\leq} \Exp_{\gamma} \Bigg[ \bigg\{ \Lambda_s( \tilde{G} ) \tilde{G}_s \Big( \int_0^s \tilde{G}_v d\tilde{W}_v \Big) - \Lambda_s( \tilde{G}') \tilde{G}'_s \Big( \int_0^s \tilde{G}'_v d\tilde{W}_v \Big) \bigg\}^2 \Bigg] \nonumber\\
& \overset{\mathrm{C.S}}{\leq} 3\Exp_{\gamma} \bigg[\Big( \int_0^s \tilde{G}_v d\tilde{W}_v \Big)^4 \bigg]^{\frac{1}{2}} \bigg( \Exp_{\gamma} \Big[ \big(\Lambda_t( \tilde{G} ) -\Lambda_t( \tilde{G}') \big)^4 \tilde{G}_t^4 \Big]^{\frac{1}{2}}+ \Exp_{\gamma} \Big[ \Lambda_t( \tilde{G}')^4 \big(\tilde{G}_t -\tilde{G}'_t\big)^4 \Big]^{\frac{1}{2}}\bigg)  \nonumber\\
& + 3\Exp_{\gamma} \Big[ \Lambda_t( \tilde{G}')^4  {\tilde{G}'_t}^4 \Big]^{\frac{1}{2}} \Exp_{\gamma} \bigg[\Big( \int_0^s \big(\tilde{G}_v-\tilde{G}'_v\big) d\tilde{W}_v \Big)^4\bigg]^{\frac{1}{2}}. \label{ineq:UniquenessDifficultTerm}
\end{align}
Gaussian calculus and \eqref{ineq:UniquenessIneq1} gives
\[
\E_{\gamma} \Big[ \big(\tilde{G}_t -\tilde{G}'_t\big)^4 \Big] = C \E_{\gamma} \Big[ \big(\tilde{G}_t -\tilde{G}'_t\big)^2 \Big]^2 \leq C_T \big| x^{\mu}_t(r)-x^{\nu}_t(r) \big|^2.
\]
Then relying on \eqref{ineq:BoundOnLambda}, \eqref{ineq:DiffLambda} and Burkh\"older Davis Gundi inequality, we obtain:
\begin{align*}
\Exp \bigg[ \Vert x^{\mu}(r) - x^{\nu}(r) \Vert_{\infty,t}^2 \bigg] & \leq C_T \int_0^t \bigg\{ \left(\int_{(\C \times D)^{2}} \Vert y-z \Vert_{\infty,s}^2 d\xi\big((y,r'),(z,\tilde{r})\big) \right) + \Exp \bigg[ \Vert x^{\mu}(r) - x^{\nu}(r) \Vert_{\infty,s}^2 \bigg] \bigg\} ds.
\end{align*}
Another use of Gronwall's lemma then yields for any $\xi \in \M_1^+\big(( \C \times D)^2 \big)$ with marginals $\mu$ and $\nu$:
\begin{align}\label{ineq:uniquenessth3}
\Exp \bigg[ \Vert x^{\mu}(r) - x^{\nu}(r) \Vert_{\infty,t}^2 \bigg] & \leq C_T \int_0^t \left(\int_{(\C \times D)^{2}} \Vert y-z \Vert_{\infty,s}^2 d\xi\big((y,r'),(z,\tilde{r})\big) \right) ds.
\end{align}

Let us prove the regularity in space of left-hand side in the above inequality. In fact, fix $r' \neq r \in D$, and consider $x^{\mu}_\cdot{(r')}$ be the strong solution of \eqref{SDE:uniqueness} with same $\tilde{W}$ but initial condition given by $\bar{x}_0(r')$. We have for any $t \in [0,T]$,
\begin{align*}
& x^{\mu}_t(r) - x^{\mu}_t(r') = \big(\bar{x}_0(r)- \bar{x}_0(r')\big) + \int_0^t \Big(f(r,s,x^{\mu}_{s}(r))+ \lambda m_{\mu}(s,x^{\mu}_{{\cdot}}(r))- f(r',s,x^{\mu}_s(r'))-\lambda m_{\mu}(s,x^{\mu}_{{\cdot}}(r)'\Big) ds\\
& + \lambda \int_0^t \int_0^s \bigg(\tilde{K}^{s}_{\mu,x^{\mu}_{\cdot}(r)}(s,v) m_{\mu}\big(v,x^{\mu}_{\cdot}(r)\big) -\tilde{K}^{s}_{\mu,x^{\mu}_{\cdot}(r')}(s,v) m_{\mu}\big(v,x^{\mu}_{\cdot}(r')\big) \bigg)dv ds \\
& + \lambda \int_0^t \bigg\{\E_{\gamma} \bigg[ \Lambda_s\big( G^{\mu}(x^{\mu}_{\cdot}(r)) \big) G^{\mu}_s(x^{\mu}_{\cdot}(r)) \Big(\int_0^s G^{\mu}_v(x^{\mu}_{\cdot}(r)) d\tilde{W}_v \Big)\bigg] \\
& - \E_{\gamma} \bigg[ \Lambda_s\big( G^{\mu}(x^{\mu}_{\cdot}(r')) \big) G^{\mu}_s(x^{\mu}_{\cdot}(r')) \Big(\int_0^s G^{\mu}_v(x^{\mu}_{\cdot}(r')) d\tilde{W}_v \Big) \bigg]\bigg\} ds.
\end{align*}

Then, developing a similar analysis than above, we find:
\begin{align*}
\Exp \Big[ \Vert  x^{\mu}(r) - x^{\mu}(r') \Vert_{\infty,t}^2\Big] & \leq C_T \bigg\{\Exp \Big[\big(\bar{x}_0(r)- \bar{x}_0(r') \big)^2 \Big]+ \Vert r-r' \Vert_{\R^d}^2\bigg\},
\end{align*}
so that $\Exp \Big[ \big\Vert x^{\mu}(r) - x^{\mu}(r') \big\Vert_{\infty,t}^2 \Big] \to 0$ as $\Vert r'- r \Vert_{\R^d} \searrow 0$, by using the continuity of the initial condition (see \cite[hypothesis (3) p. 7]{cabana-touboul:15}). We then conclude exactly as in \cite[Theorem 9]{cabana-touboul:15} that $r \to \Exp \Big[ d_t\Big((x^{\mu}(r),r),(x^{\nu}(r) ,r)\Big)^2 \Big]$ is continuous, and that $\mu \to Q_{\mu}$ admits a unique fix point relying on equation \eqref{ineq:uniquenessth3} and a Picard's iteration.
\end{proof}

\begin{lemma}\label{lem:existence_uniqueness}
For any $r\in D$ and $\mu\in \M_{1}^{+}(\C\times D)$, there exists a unique strong solution to the SDE:
\begin{equation*}
\left\{
  \begin{array}{ll}
  dx_t^{{\mu}}(r) = f(r,t,x_t^{{\mu}}(r)) dt + \lambda O^{\tilde{W}}_{\mu}(t,x_t^{{\mu}}(r)) dt + \lambda d\tilde{W}_t\\
  x_0^{{\mu}}(r) = \bar{x}_0(r).
  \end{array}
\right.
\end{equation*}
where $\tilde{W}$ is a $\mathbbm{P}$-Brownian motion, $\bar{x}_0(r) \in \R$ is the realization of the continuous version for the family of initial laws $\big(\mu_0(r)\big)_{r \in D}$, and
\[
O^{\tilde{W}}_{\mu}(t,x):= \E_{\gamma} \bigg[ \Lambda_t\big( G^{\mu}(x)\big) G^{\mu}_t(x) \int_0^t G^{\mu}_s(x)\big(d\tilde{W}_s- m_{\mu}(s,x)ds\big) \bigg] + m_{\mu}(t,x).
\]
\end{lemma}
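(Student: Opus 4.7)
The plan is a Picard iteration in $L^{2}(\P; \C)$, exploiting the additive structure of the Brownian noise. Write the drift as $D(t, x_{[0,t]}) := f(r, t, x_{t}) + \lambda O^{\tilde{W}}_{\mu}(t, x_{\cdot})$, a random path-functional of $x$ that depends on $\tilde{W}$. Set $x^{(0)}_{t} := \bar{x}_{0}(r) + \lambda \tilde{W}_{t}$ and define inductively
\[
x^{(n+1)}_{t} := \bar{x}_{0}(r) + \int_{0}^{t} \Big[f(r,s,x^{(n)}_{s}) + \lambda O^{\tilde{W}}_{\mu}(s, x^{(n)}_{\cdot}) \Big]\,ds + \lambda \tilde{W}_{t}.
\]
Each iterate has finite second moments by combining the bounds $|m_{\mu}| \le \bar{J}\bmax/\lambda$, $|\Lambda_{t}(G^{\mu}(x))| \le C_{T}$ from~\eqref{ineq:BoundOnLambda}, Cauchy--Schwarz, and standard moment estimates for $\tilde{W}$.

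The core of the proof is the contraction bound
\[
\Exp\!\left[\|x^{(n+1)} - x^{(n)}\|_{\infty,t}^{2}\right] \le C_{T} \int_{0}^{t} \Exp\!\left[\|x^{(n)} - x^{(n-1)}\|_{\infty,s}^{2}\right] ds,
\]
which reduces to showing that $x \mapsto O^{\tilde{W}}_{\mu}(s,x)$ is Lipschitz in the path. For two continuous paths $x, x'$, I introduce a $\gamma$-centered bidimensional Gaussian process $(G, G')$ whose covariance is the obvious analogue of~\eqref{def:Kximunu} with $\nu = \mu$ and $\big(x^{\mu}_{\cdot}(r), x^{\nu}_{\cdot}(r)\big)$ replaced by $(x, x')$, so that $(G, G')$ has the same law under $\gamma$ as $\big(G^{\mu}(x), G^{\mu}(x')\big)$. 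The Lipschitz property of $b$ gives $\E_{\gamma}\big[(G_{t} - G'_{t})^{2}\big] \le C\,|x_{t}-x'_{t}|^{2}$. Combining this with Proposition~\ref{prop:MeanAndVarRegularity}, inequality~\eqref{ineq:DiffLambda}, Isserlis' theorem, and the Burkholder--Davis--Gundy inequality, one mimics the chain of estimates producing~\eqref{ineq:UniquenessDifficultTerm} and~\eqref{ineq:uniquenessth3} to obtain $\Exp\big[|O^{\tilde{W}}_{\mu}(s,x) - O^{\tilde{W}}_{\mu}(s,x')|^{2}\big] \le C_{T}\|x-x'\|_{\infty,s}^{2}$; adding the Lipschitz contribution of $f$ yields the claimed bound.

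Iterating the contraction estimate gives
\[
\Exp\!\left[\|x^{(n+1)} - x^{(n)}\|_{\infty,T}^{2}\right] \le \frac{(C_{T}T)^{n}}{n!}\,\Exp\!\left[\|x^{(1)} - x^{(0)}\|_{\infty,T}^{2}\right],
\]
so $(x^{(n)})_{n}$ is Cauchy in $L^{2}(\P; \C)$; its limit $x^{\mu}(r)$ satisfies the SDE by passage to the limit in the defining identity, and is a strong solution since each iterate is adapted to the filtration generated by $\tilde{W}$ and $\bar{x}_{0}(r)$. Uniqueness follows by applying the same contraction estimate to two strong solutions with identical initial data and Brownian path and invoking Gronwall's lemma. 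The main technical obstacle is the Lipschitz control of $O^{\tilde{W}}_{\mu}$, whose defining expression involves a product of $\Lambda_{t}(G^{\mu}(x))$, the Gaussian $G^{\mu}_{t}(x)$, and a stochastic integral against $\tilde{W}$; however, the required tools are exactly those developed in the fixed-point analysis of Theorem~\ref{thm:UniquenessOfTheMinimum}, applied with the same measure $\mu$ but two different paths, so the argument proceeds by direct adaptation rather than from scratch.
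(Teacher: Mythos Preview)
Your proposal is correct and follows essentially the same route as the paper: a Picard iteration combined with the $L^{2}$ contraction estimate $\Exp[\|x^{(n+1)}-x^{(n)}\|_{\infty,t}^{2}]\le C_{T}\int_{0}^{t}\Exp[\|x^{(n)}-x^{(n-1)}\|_{\infty,s}^{2}]\,ds$, obtained by reusing the Lipschitz-in-path control of $O^{\tilde{W}}_{\mu}$ already established in the proof of Theorem~\ref{thm:UniquenessOfTheMinimum} (via the coupled Gaussian pair, \eqref{ineq:DiffLambda}, Isserlis, and BDG). One small imprecision: the bound you write as $\Exp\big[|O^{\tilde{W}}_{\mu}(s,x)-O^{\tilde{W}}_{\mu}(s,x')|^{2}\big]\le C_{T}\|x-x'\|_{\infty,s}^{2}$ should carry an expectation on the right-hand side as well, since the stochastic-integral term is only controlled after averaging over $\tilde{W}$; this is harmless for the contraction argument you actually run.
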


\begin{proof}
The proof relies on Picard's iterations. Let $x^0 \in \C$ with $x^0_0=\bar{x}_0(r)$, and define recursively the sequence $\big(x^n_t, 0\leq t \leq T \big)_{n \in \N^*}$ by
\[
x^{n+1}_t = \bar{x}_0(r) + \int_0^t f(r,s,x_s^{n}) ds + \int_0^t \lambda O^{\tilde{W}}_{\mu}(s,x_s^{n}) ds + \lambda \tilde{W}_t, \; \; \forall t \in [0,T].
\]
As a consequence, for any $t \in [0,T]$ we obtain as in \eqref{ineq:FixPointAverage}
\begin{align*}
x^{n+1}_t -x^n_t & =  \int_0^t \big(f(r,s,x_s^{n})- f(r,s,x_s^{n-1})\big) ds + \int_0^t \lambda \big(O^{\tilde{W}}_{\mu}(s,x_{\cdot}^{n})-O^{\tilde{W}}_{\mu}(s,x_{\cdot}^{n-1})\big) ds\\
& = \int_0^t \big(f(r,s,x_s^{n})+\lambda m_{\mu}(s,x^n_s)- f(r,s,x_s^{n-1}) -\lambda m_{\mu}(s,x^{n-1}_s)\big) ds\\
& + \lambda \int_0^t \int_0^s \Big( \widetilde{K}^s_{\mu,x^n_{\cdot}}(s,v)m_{\mu}(v,x_{\cdot}^n) \widetilde{K}^s_{\mu,x^{n-1}_{\cdot}}(s,v)m_{\mu}(v,x_{\cdot}^{n-1})\Big)dv ds\\
+ \lambda \int_0^t & \bigg\{\E_{\gamma} \bigg[ \Lambda_s\big( G^{\mu}(x^{n}_{\cdot}) \big) G^{\mu}_s(x^{n}_{\cdot}) \Big(\int_0^s G^{\mu}_v(x^n_{\cdot}) d\tilde{W}_v \Big)\bigg] \\
& - \E_{\gamma} \bigg[ \Lambda_s\big( G^{\mu}(x^{n-1}_{\cdot}) \big) G^{\mu}_s(x^{n-1}_{\cdot}) \Big(\int_0^s G^{\mu}_v(x^{n-1}_{\cdot}) d\tilde{W}_v \Big) \bigg]\bigg\} ds.
\end{align*}
Then, using inequalities~\eqref{ineq:uniquenessth1} ans \eqref{ineq:uniquenessth2} to cope with the two first terms of the right-hand side, and controlling the last term as in the proof of theorem~\ref{thm:UniquenessOfTheMinimum}, we find taking the expectation
\[
\Exp \Big[ \Vert \big| x^{n+1}- x^n \Vert_{\infty,t}^2 \Big] \leq C_T \int_0^t \Exp \Big[ \Vert x^{n}- x^{n-1} \Vert_{\infty,s}^2 \Big] ds.
\]
The conclusion now relies on classical arguments.
\end{proof}

\subsection{Convergence of the process and Quenched results.}

We are now in a position to prove theorem~\ref{thm:Convergence}.

\begin{proof}[Proof of Theorem~\eqref{thm:Convergence}]
	Let $\delta>0$ and $B(Q,\delta)$ the open ball of radius $\delta$ centered in $Q$ for the Vaserstein distance. We prove that $Q^{N}(\hat{\mu}_N\notin B(Q,\delta))$ tends to zero exponentially fast as $N$ goes to infinity. In fact, the upper-bound of the LDP for the closed set $B(Q,\delta)^c$ yields
	\[
	\limsup_{N\to\infty} \frac{1}{N} \log Q^{N}(\hat{\mu}_N \notin B(Q,\delta)) \leq -\inf_{B(Q,\delta)^c} H < 0
	\]
	where the last inequality comes from the fact that $H$ attains its unique minimum at $Q$. This implies that $Q^{N}(\hat{\mu}_N \notin B(Q,\delta)) \to 0$ at least exponentially fast, so that the result is proved.
\end{proof}

\begin{proof}[Proof of Theorem~\ref{thm:Quenched}]
For a given closed set $F$, we can obtain a quenched upper-bound as a consequence of Theorem.3 and Borel-Cantelli, by proceeding exactly as in \cite[Theorem 2.7 of Appendix C.]{ben-arous-guionnet:95}. 
As $\M_1^+(\C \times D)$ is Polish, we are able to define a sequence of closed sets $(F_i)_{i \in \N}$ of $\M_1^+(\C \times D)$ such that for all closed set $F\subset \M_1^+(\C \times D)$ there exists $A_F \subset \N$, and
\[
F= \bigcap_{i \in A_F} F_i.
\] 
Moreover, as $\big\{ F \subset \M_1^+(\C \times D), \exists A_F \subset \N, F= \bigcap_{i \in A_F} F_i \big\}$ is countable and contains every closed set, we obtain an $\mathcal{P}$-almost sure upper-bound for every closed set:
\[
\mathcal{P}-a.s, \; \; \forall \text{ closed set } F \subset \M_1^+(\C \times D), \; \;\limsup_{N\to\infty} \frac{1}{N} \log Q^{N}_{\mathbf{r}}(J)(\hat{\mu}_N \in F) \leq -\inf_{F} H.
\] 
$H$ being a good rate function, the $\mathcal{P}$-almost sure exponential tightness is a consequence of \cite[Exercice 4.1.10 (c)]{dembo-zeitouni:09} (citing the results of~\cite[Lemma 2.6]{lynch1987large} and~\cite[Theorem P]{pukhalskii}), whereas the $\mathcal{P}$-almost sure convergence of the empirical measure stems from Borel-Cantelli lemma, noting that for any $\varepsilon>0$, 
\[Q^{N}_{\mathbf{r}}(J)(\hat{\mu}_N\notin B(Q,\varepsilon)) = Q^{N}_{\mathbf{r}}(J)(d_{T}(\hat{\mu}_N,Q)\geq \varepsilon)\]
is summable. 

\end{proof}

\section{Perspectives and Open problems}

In this paper, we have investigated the dynamics of randomly interacting diffusions with complex interactions depending on the state of both particles. From the mathematical viewpoint, we have extended fine estimates on large deviations initially developed for spin-glass systems~\cite{ben-arous-guionnet:95,guionnet:97} to the present setting. The proof proceeds by using a combination of Sanov's theorem and to extend Varadhan's lemma to a functional that does not directly satisfies the canonical assumptions. The limit of the system is a complex non-Markovian process whose dynamics is relatively hard to understand at this level of generality. However, the limits obtained are valid only in the presence of noise, since Girsanov's theorem is used to relate the dynamics of the coupled system to the uncoupled system. The limit of randomly connected systems in the absence of noise is a complex issue with numerous applications, and very little work have been done on this topic. One outstanding contribution that addresses a similar question is the work of Ben Arous, Dembo and Guionnet for spherical spin glass~\cite{ben-arous-dembo-guionnet:01}. In that work, the authors characterize the thermodynamic limit of this system and analyze its long term behavior, providing a mathematical approach for aging. This approach uses the rotational symmetry of the Hamiltonian allowing, by a change of orthogonal basis, to rely on results on the eigenvalues of the coupling matrix. A similar approach seems unlikely to readily extend to the setting of the present manuscript.

In the context of neuroscience, it may be useful to consider spatially extended systems, with delays in the communication, and possibly non-Gaussian interactions. It shall not be hard to combine the methods of the present article to those in~\cite{cabana-touboul:15} and the specific methods developed here to extend the present results to spatially-dependent interactions with space dependent delays. Moreover, we expect that the limit obtained is universal with respect to the distribution of the connectivity coefficient as soon as their tails have a sufficiently fast decay, as demonstrated for a discrete-time neuronal network in~\cite{moynot2002large}. Eventually, the results shall hold in cases where the intrinsic dynamics is not Lipschitz-continuous as soon as sufficient non-explosion estimates are obtained on the solutions of the uncoupled system, as was the case in~\cite{benarous-guionnet:95,guionnet:97}. We however mention that in this case, the original fixed-point method developed in the present article to prove existence and uniqueness of solutions to the mean-field equations are no more valid and adequate methods needs to be used as the ones presented in~\cite{benarous-guionnet:95,guionnet:97}.

\end{document}